\documentclass[12pt,a4,twoside]{amsart}
\usepackage[pdftex]{graphicx}

\usepackage[T2A]{fontenc}
\usepackage[utf8]{inputenc}
\usepackage[english]{babel}

\usepackage{amsmath}
\usepackage{amssymb}
\usepackage{amscd}
\usepackage{amsfonts,amsmath,amsthm}

\usepackage[matrix,arrow,curve]{xy}
\usepackage{xfrac}
\usepackage{color}
\usepackage{url}
\usepackage{hyperref}
\hypersetup{colorlinks=true,linkcolor=blue,urlcolor=blue}

\DeclareMathOperator{\sq}{\mathbb{D}}

\DeclareMathOperator{\reg}{reg} 
                
\DeclareMathOperator{\sing}{sing}
\DeclareMathOperator{\atyp}{atyp}
\DeclareMathOperator{\iflex}{iflex}
\DeclareMathOperator{\focal}{focal}

\DeclareMathOperator{\ED}{ED}
\DeclareMathOperator{\tED}{\ED*}

\DeclareMathOperator{\hot}{hot}

\newtheorem{Ex}{Example}
\newtheorem{theorem}{Theorem}
\newtheorem{prop}{Proposition}
\newtheorem{cor}{Corollary}
\newtheorem{lemma}{Lemma}
\newtheorem*{main lemma}{The main lemma}
\newtheorem*{remark}{Remark}
\newtheorem{definition}{Definition}

\title[Normals and Double Normals for plane Curves  ]{Distance, Normals and Double Normals for real  plane Curves with Singularities}

\author{Dirk Siersma}
\address{D. Siersma, Mathematisch Instituut, Universiteit Utrecht, P.O.Box 80.010 , 3508TA Utrecht, The Netherlands, d.siersma@uu.nl}
\keywords{Distance, Normal, Double normal, caustic, Morse theory, bifurcations. \ \   MSC  14H20, 53A04, 58K05 }
\date{\today}

\thanks{The author acknowledges support from the project ``Singularities and Applications'' - CF 132/31.07.2023 funded by the European Union - NextGenerationEU - through Romania's National Recovery and Resilience Plan, and support by the grant CNRS-INSMI-IEA-329. }

\begin{document}

\begin{abstract}
For real algebraic curves in the plane with singularities we investigate the relation between normals and double normals and the critical points of the squared distance function (up to topological equivalence). For the distance to a given point we show that the topological discriminant consists of the (traditional) evolute, together with some distinguished normal lines at algebraic singular points. We pay special attention to the behavior at points, where the curve is smooth, but only $C^1$ embedded. We discus the differences and similarities with the ED-discriminant in the complex theory. We give counting formula's for normals and double normals, relating them with maxima and minima of the distance function.
\end{abstract}

\maketitle

\section{Introduction}

The study of algebraic curves in the plane and their singularities has a long history.  In the complex setting  a very detailed description is given by the Puisseux expansion. For the real setting we refer especially to Bureau \cite{Bu}, Brieskorn-Kn{\"o}rrer \cite{BK} and \cite{PRS}. The caustic (or evolute) of a smooth plane curve as envelope of the normals is a classical subject in differential geometry. It is related to focal points and to the singularities of the offsets. The singularities of the caustic occur at vertices,  the points where the derivative of the curvature vanishes. 
Normals correspond to critical points of the squared distance function from a given point to the curve. Generically they come with a Morse-index and this  enables us to do Morse-counting of normals. Similar ideas apply to double normals. In this paper we include singularities and describe  new phenomena.

We consider reduced real algebraic curves $X$ given by $\Phi(x_1,x_2)=0$.  \textit{Algebraic singular points} are the points of  $X$  where the gradient vanishes: $\nabla \Phi = 0$. Algebraic singular points are isolated.  
 At each singular point $X$
 is the union of finitely many local branches. We will assume that they are all real and 1-dimensional; so we omit void components  and points  (such as $x_1^2 + x_2^2 = c$ with $c \le 0$).  Branches can be smooth or singular; but each branch has a well defined tangent line at each point of $X$ and therefore also a well defined normal line.

One can also describe real branches as images of  parametrizations $(x_1(t),x_2(t))$.
These are maps $\mathbb{R}\to \mathbb{R}^2 $ or $S^1 \to \mathbb{R}^2$. In case of self intersections these  mappings serve as normalizations of $X$.

In contrast to  the complex case, having an algebraic singularity can still imply that a local branch is smooth, more precisely  a $C^k$ immersed submanifold ($k \ge 1$) of $\mathbb{R}^2$. See the Proposition \ref{p:k-smooth}. The remaining algebraic critical points are called \textit{geometric critical points}, or \textit{cuspidal} due to their shape.

Next we introduce the squared Euclidean Distance (ED) function
\begin{equation}\label{e:sqd0}
\sq_u:X\rightarrow \mathbb{R}, \ \ \  \sq_u(x):=|x-u|^2.
\end{equation}
and its extension the the normalization of $X$. We will consider topological critical points. These can only be minima or maxima; the other points are topologically regular.
In Proposition \ref{p:oddcrpt}   and \ref{l:outofnormal}  we show that for smooth points of the curve $\sq_u$ is critical iff $u$ belongs to the normal line. For cuspidal points $\sq_u$ is generically topologically singular. We will call the corresponding interval $[ux]$ a s-normal from $u$.

Next we define the topological ED-discriminant $\Delta_{tED}$ as the complement of the set of stable points $u \in \mathbb{R}^2$, stability with respect to the local topological type. In Theorem \ref{t:top-discr} we show that
$\Delta_{tED}$  consists of the caustic (evolute), together with all normal  lines  at  geometric singular points  and  sometimes the normal lines at strict $C^1$-points (smooth points, where $X$ is not a $C^2$-embedding):
$$  \mbox{\rm tED-discriminant}  \; = \mbox{\rm evolute}  \; \cup \;  \mbox{\rm distinguished  normals}$$
We  present an important local  ingredient  in  Theorem \ref{t:NinD}:  we test if a normal is included in the discriminant and determine the bifurcation of critical points and the focal sets.  The proofs are rather forward computations.

We introduce the concept of \textit{focal point} in our more general context of singular points  and determine its location on the normal, depending on a \textit{competition factor} and determine when the focal point coincides with $x \in X$. 
The limits of focal sets at  singular points coincide with the focal point on the normal line, the limit is infinite if $\rho > 1$, finite if $\rho=1$ and coincides with $x \in X$ if $\rho < 1$. See Theorem \ref{t:NinD}, where we also give a full description of discriminant and bifurcation inTable  \ref{table:10cases} and Figure  \ref{fig:TenCases}.

 Near algebraic critical points it can happen, that $X$ is smooth, but  only $C^1$.  This case  is interesting and differs from the smooth  $C^2$-case; e.g. the normal line can belong to $\Delta_{tED}$ and there can occur some special bifurcations of critical points.  Also standard Morse theory does not applies in this case. 

In Proposition 
\ref{p:count-normals} we present the counting of normals (including s-normals, related to singular points) and show that 
each cuspidal point contributes to at least one normal:
$$ \sharp (normals) \ge \sharp (cusps) $$
Section \ref{s:double} deals with double normals.  We first study the squared distance between two curves, which both may have singularities
$$\mathbb{B}  : X_a \times X_b \rightarrow  \mathbb{R}, \ \ \ \mathbb{B}(a,b)  =|a-b|^2.$$
and determine its critical points in Proposition \ref{p:double-crit}. The critical points correspond to usual double normals on the smooth part, together with rs-normals and ss-normals (related to singular points) and intersection points. In the smooth case we compare the Morse type of double normals with the local Morse type at the end-points.

Finally we apply this to the (classical) case of \textit{double normals} on a single curve and state counting results in Propositions \ref{p:double-count} and \ref{p:extra-selfdouble}: The number of double normals is at least 2, every cuspidal point contributes to at least one double normal, Every pair of cuspidal points contributes to a double normal. Self-intersections contribute  as
 minimima.
$$ \sharp(double \; normals) \ge \sharp (pairs \; of \; cusps)$$
The complex case  has been studied in \cite{JST}. The (generic) number of normals on the regular part of the variety is known as the ED-degree,  excluding the normals at singularities.
The real case is different: it is important to include singular points  in order to get maximal or minimal distances.\\

\section{Normals and the squared distance function in the presence of singularities}

\subsection{Types of algebraic singular points}
We assume, with respect to the tangent line,  a local parametrization:
\begin{equation}\label{e:Puis}
x_1(t) = t ^p \; , \;  x_2(t) = c \,t^q + \sigma_{q+1}(t) 
\end{equation}
with $q >p \ge 2$, deg $\sigma_{q+1} > q$ and $c \ne 0$.
We can even assume $c >0$ .

\begin{prop}\label{p:k-smooth}
Let $p = odd$   and $k = [q/p]$, then a branch of  $X$ is locally an embedded $C^k$ manifold.

\end{prop}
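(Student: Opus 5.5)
Since $p$ is odd, the map $t\mapsto t^p$ is a homeomorphism of $\mathbb{R}$, so I reparametrize the branch by $s = x_1 = t^p$, i.e.\ $t = s^{1/p}$ with the real odd root. Near the singular point the branch is then the graph $x_2 = f(s)$ over the $x_1$-axis, where
$$ f(s) = c\, s^{q/p} + \sigma_{q+1}(s^{1/p}), $$
and $s^{r/p}$ means $(s^{1/p})^r$, which is well defined and odd or even in $s$ according to the parity of $r$. The parametrization is injective, so the branch is locally exactly this graph. It therefore suffices to show that $f$ is $C^k$ near $s=0$ with $k=[q/p]$. A graph of a $C^k$ function is an embedded $C^k$ submanifold.

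For $s\neq 0$ the function $f$ is real analytic, since $s^{1/p}$ is analytic away from $0$ and $\sigma_{q+1}$ is a convergent power series. Expanding $\sigma_{q+1}(t)=\sum_{r>q} a_r t^r$ gives $f(s)=\sum_{r\ge q} a_r\, s^{r/p}$, a convergent series in $s^{1/p}$ with $a_q=c$. Each term $s^{r/p}$ with $r\ge q$ is $C^{[r/p]}$ at $0$, hence at least $C^k$. Indeed, if $r/p\in\mathbb{Z}$ it is a polynomial. Otherwise $\frac{d^j}{ds^j}s^{r/p}$ is a constant multiple of $s^{r/p-j}$, which for $j\le k\le r/p$ stays continuous at $0$; it vanishes there when $r/p - j>0$, and equals $1$ identically when $r/p=j$. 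The one-sided derivatives at $0$ match because for odd $p$ the expression $s^{(r-jp)/p}$ is a continuous function on all of $\mathbb{R}$.

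The main obstacle is passing from the individual terms to the infinite series. I would handle it by differentiating term by term: for $s\ne0$,
$$ f^{(j)}(s)=\sum_{r\ge q} a_r \binom{r/p}{j} j!\, s^{r/p-j}. $$
The coefficients $|\binom{r/p}{j}j!|$ grow only polynomially in $r$, so for $j\le k$ this series converges uniformly for $|s|^{1/p}<\delta$, with $\delta$ smaller than the radius of convergence of $\sigma$. Each term is continuous, because $r/p-j\ge q/p-k\ge0$. Uniform convergence of the derivatives then gives $f\in C^k$ by the standard theorem on term-by-term differentiation, applied on a neighbourhood of $0$ including $0$; the continuity of $s^{1/p}$ handles the point $s=0$ itself. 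Finally, I would note that $f^{(k+1)}$ is generally unbounded when $p\nmid q$, because of the term $c\,s^{q/p-k-1}$, so $k$ is sharp. This is not required by the statement.
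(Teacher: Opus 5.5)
Your proof is correct and follows the same route as the paper. You reparametrize by $s=t^p$ (legitimate because $p$ is odd), write the branch as the graph $x_2=c\,s^{q/p}+\sigma_{q+1}(s^{1/p})$ over the tangent line, and observe that this function is $C^k$ with $k=[q/p]$. The paper only asserts the last step, writing $x_2=c\,s^k\tau(s^{1/p})$ and calling it clear; your termwise differentiation with uniform convergence on $|s|^{1/p}\le\delta$ supplies the justification it leaves out.
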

\begin{proof}
We take a new parametrization with $t^p=s$ and get:
\begin{equation}
 x_1(s) = s \; , \;  x_2(s) =c \, s^{q/p} + \sigma_{q+1}(s^{1/q}) =c\,s^k . \tau (s^{1/p}) ,
 \end{equation} \label{e:puis-par}
where $k = [q/p]$.
This is clearly a $C^{k}$ local immersion.
\end{proof}  

\begin{definition} 
For a local branch we distinguish (see also Figure \ref{fig:fourcusps}):
\begin{itemize}
\item \textit{\bf smooth points}, when the local branch is of type $p = odd$, i.e the curve is an embedded $C^1$-manifold, The smooth point is called a {\bf strict $C^1$-point} if the curve is not a $C^2$ imbedded manifold, i.e  when $p = odd$ and $q  <2p$.
\item  \textit{\bf geometric singular points} or \textit{\bf cuspidal points}  $x \in X$, where there is a  local branch is of type $p = even$, i.e  the local branch is not a $C^k$ immersion ($k \ge 1$).
\end{itemize}
\end{definition}

\vspace{-0.6cm}
\begin{center}
\begin{figure}[h]
\includegraphics[width=14cm]{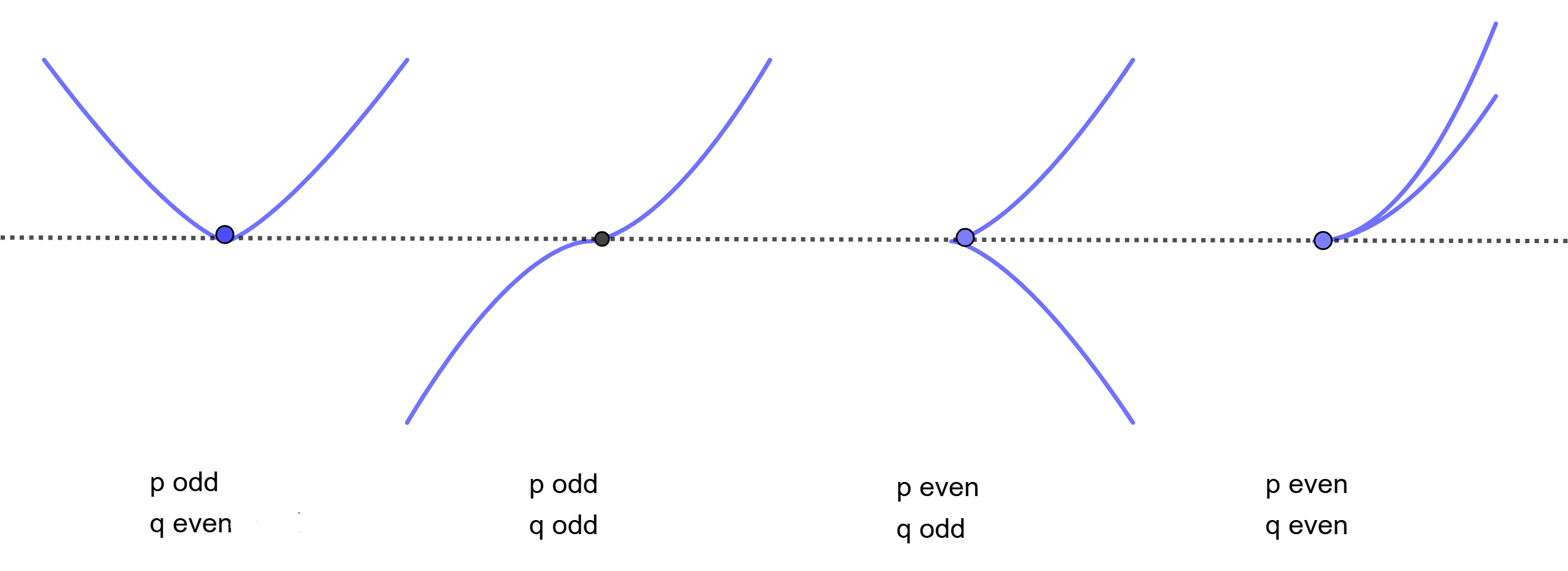}
\caption{Four types of branches. The first and second are (at least) $C^1$-smooth. The  singularity of the $3^{rd}$ curve is called cusp  of the first kind; on the $4^{th}$ curve: cusp of second kind.}\label{fig:fourcusps}
\end{figure}
\end{center}
\vspace{-0.6cm}
Algebraic critical points also occur as  multiple intersections of local branches, smooth or singular.
Tangent lines are defined at every point of $X$ for each branch smooth or singular.
We can define normal lines for all $x \in X$: the line $N_{\Gamma,x}$, which is orthogonal to the tangent line at a local branch $\Gamma$.
We write $N_x$ for the union of those normal lines at $x$ (including the case of a single branch).

\subsection{Squared distance function}

The squared distance function in presence of singularities also has been studied in  \cite{De} in the general context of  sets definable in  polynomially bounded o-minimal structures. The main focus in that paper were on properties of the medial  axis. The cases, where $X$ is algebraic and $C^1$ but not $C^2$ have been studied under the name \textit{supersquared}. \\

Our parametrization  is not  a regular parametrization near singular points. We don't use the differential structure of $X$ but of its normalization. Up to homeomorphisms it gives a  topological description of $\sq_u$. 
$\sq_u$ is called topologically regular \textit{top-regular} at $x \in X$ when is locally homeomorphic to a linear function.
It follows that (besides the constant function) the only possibilities for the local behavior are: top-regular (increasing or decreasing), minimum or maximum. The following lemma is evident.

\begin{lemma}
On each (normalized) branch 
the topological type is completely determined by the number of maxima and mininma.  

\end{lemma}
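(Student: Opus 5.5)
The plan is to work on a single normalized branch, which is a connected one-dimensional manifold $B$ (an interval $\mathbb{R}$ or a circle $S^1$). On it $\sq_u$ pulls back to a continuous function $f: B \to \mathbb{R}$, and the lemma amounts to a statement about such functions. Since the parametrization is real algebraic (Puiseux-type, semi-algebraic), $f$ is a semi-algebraic function of one variable, and I will use this to keep everything finite and tame.

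\emph{Step 1: finiteness.} First I would show that $f$ has only finitely many points that are not top-regular. Unless $f$ is constant, which happens only when $B$ lies on a circle centred at $u$ and is the excluded exceptional case, the set where $f$ is not locally strictly monotone is a semi-algebraic subset of $B$. By the monotonicity theorem for semi-algebraic (o-minimal) functions this set is finite. Away from it, $f$ is strictly monotone on each open interval and hence locally homeomorphic to a linear function. At each exceptional point $x$ the one-sided monotonicities decide the type: increasing–decreasing gives a maximum, decreasing–increasing gives a minimum, and equal monotonicities give a top-regular point. So the critical set is a finite set of maxima and minima, as claimed just before the lemma.

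\emph{Step 2: the data determine $f$ up to homeomorphism.} Order the critical points along $B$ (cyclically when $B=S^1$). On a circle, and between consecutive critical points on a line, maxima and minima alternate, because $f$ is strictly monotone on each complementary interval. When $B \cong \mathbb{R}$, the two end intervals are monotone, and whether each end goes up or down is forced by the type of the adjacent critical point. If there are no critical points, $f$ is a monotone homeomorphism onto its image, so it is equivalent to a linear function.

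Given two such functions $f,g$ on branches of the same kind with the same number of maxima and minima, I would match the critical points in order. I would then build a homeomorphism $h$ of the source, defined piecewise on each monotone interval, together with a homeomorphism $k$ of $\mathbb{R}$ that is orientation preserving and reorders the critical values compatibly, such that $k\circ f = g\circ h$. On each monotone piece the required map is $h = g^{-1}\circ k\circ f$, which is a homeomorphism of intervals. Here "topological type" is read as right–left equivalence, or right equivalence up to matching values, as the paper uses it. This gives the lemma.

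\emph{Main obstacle.} The one real point is Step 1, namely excluding infinitely many oscillations near a singular point of the branch. I would handle it by the semi-algebraicity of the normalization map, or directly: in the Puiseux parameter $t$, $f(t)$ is a convergent power series in $t$, so $f'(t)$ has isolated zeros. The remaining care is the bookkeeping for the circle versus the line, where the counts of maxima and minima agree on $S^1$ but may differ by one on $\mathbb{R}$.
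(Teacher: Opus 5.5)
Your Step 1 is sound, and it supplies something the paper takes for granted (the paper calls the lemma evident and gives no argument). It gives finiteness of the top-critical set from analyticity of $\sq_u$ in the normalization/Puiseux parameter, and it classifies each point by its one-sided monotonicity. The alternation of maxima and minima in Step 2 is also correct, and it is essentially the whole content of the lemma.

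The gap is the construction of the target homeomorphism $k$. Suppose $k\circ f=g\circ h$. Then $h$ carries the top-critical points of $f$ to those of $g$, and $k$ maps the set of critical values of $f$ monotonically and bijectively onto that of $g$. So the order of the critical values, and every coincidence among them, is an invariant; an orientation-preserving $k$ cannot ``reorder'' anything. These data are not determined by the numbers of maxima and minima. For an ellipse and $u$ at its centre, $\sq_u$ has type $2m+2M$ with only two distinct critical values. For $u$ slightly off both axes (still inside the evolute) it has type $2m+2M$ with four distinct critical values. No pair $(h,k)$ relates these two functions. So for right--left equivalence the statement you set out to prove is false, and the map $h=g^{-1}\circ k\circ f$ has nothing to start from.

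The lemma is true only for the weaker notion the paper actually uses: stability ``with respect to the local topological type'' (see the Introduction). This means the sequence of local types along the branch, up to homeomorphism of the source. It is also why Maxwell strata, such as the symmetry axes of the ellipse, do not occur in $\Delta_{tED}$ in Theorem~\ref{t:top-discr}. Once you state that reading explicitly, your alternation argument completes the proof, together with the end behaviour on $\mathbb{R}$ and a source orientation reversal when the counts are equal. The construction of $k$ should be dropped. Your hedge ``right equivalence up to matching values'' is exactly the point that had to be pinned down.
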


\begin{definition}
A {\bf focal point} on a normal is a point $u$, where the type of $\sq_u$ is different from the generic types on the normal. In other words: the center of the best-tangent circle.   In the smooth $C^2$-case this generic type  is Morse in the usual parametrization.

\noindent
The {\bf Caustic} is the collection of focal points on the normal  lines  (including those at geometric singular points). It extends the definition of evolute to the algebraic singular points.
\end{definition}

We recall, that in the smooth $C^2$-case there are 3 equivalent definitions of caustics:
\begin{itemize}
\item centers of curvature (best tangent circles),
\item envelope of normal lines,
\item singularities of the equidistant curves.
\end{itemize}

Note that if $p=odd$ and $q \ge 2p$ there is  $C^2$-differentiability and the theory of focal points  is covered by the usual Morse theoretic approach to differential geometry \cite{Milnor}. This includes Milnor's magic formula, which in the curve case states:
{\em The index is 0 if there is no focal point between u and x and the index is 1  if the the focal point is outside the closed interval ux. }

We say that $u$ is a \textit{topologically stable point for $\sq_u$} if there is a neighborhood $U$ of $u$ such that for all $v \in U$ the
functions $\sq_v$ are topologically equivalent to $\sq_u$. We define the {\bf tED-discriminant} $\Delta_{tED}$ as complement of the stable points.

\subsection{Critical points at the algebraic singular points of X}
We first consider real smooth points  ($p = odd$). In this case the concept of critical point is (as usual) defined by the vanishing of the gradient. For $C^1$-functions holds:
If the gradient is non-zero in $x \in X$  then the function is locally topologically ($C^0$) regular. A reference is \cite{CE}; 
it is a by-product of a very general theorem on page 38. 

\begin{prop}\label{p:oddcrpt} Let  $x \in X$ be a  smooth point then\\
$ \sq_u$ is critical $\Longleftrightarrow$  $u$ belongs to the normal $N_x$.
\end{prop}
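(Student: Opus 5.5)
Since $x$ is a smooth point, the branch $\Gamma$ through $x$ has $p$ odd, and by Proposition \ref{p:k-smooth} it is an embedded $C^k$ curve with $k \ge 1$. Concretely, in coordinates adapted to the tangent line at $x$ (translate $x$ to the origin, rotate so the tangent is the $x_1$-axis), Proposition \ref{p:k-smooth} gives the regular $C^1$ parametrization $\gamma(s) = (s, f(s))$ with $f(s) = c\,s^{q/p}+\dots$. Because $q>p$, we have $f(0)=0$ and $f'(0)=0$. We will therefore treat $\sq_u\circ\gamma$ as a $C^1$ function of the single variable $s$. Its critical points are the zeros of its derivative, and at a point where the derivative is nonzero the function is topologically regular, by the result of \cite{CE} quoted just before the statement.

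The key step is to compute the derivative:
$$\frac{d}{ds}\,\sq_u(\gamma(s)) = 2\,\langle \gamma(s)-u,\ \gamma'(s)\rangle.$$
At $s=0$ we have $\gamma(0)=x=0$ and $\gamma'(0)=(1,0)$, so the derivative equals $-2u_1$. It vanishes exactly when $u_1=0$, i.e. exactly when $u$ lies on the $x_2$-axis, which is the normal line $N_{\Gamma,x}$. If $x$ lies on several smooth branches, we apply this to each branch separately; "critical" then means critical on some branch of the normalization, and the union of the branch normals is $N_x$. This proves both directions.

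The one subtle point is what "critical" means when $\Gamma$ is only $C^1$, i.e. at a strict $C^1$-point, where $q<2p$. The implication ($\Leftarrow$) is a purely first-order statement: at such a point the gradient vanishes, so $x$ is critical in the differential sense, even though it may turn out to be topologically regular. The implication ($\Rightarrow$) needs only the $C^1$ regularity supplied by Proposition \ref{p:k-smooth} together with \cite{CE}: if $u\notin N_x$, the derivative is nonzero, so $\sq_u$ is locally homeomorphic to a linear function and $x$ is not critical. I expect the main care to be needed in checking that the reparametrization $s=t^p$ is a genuine $C^1$ diffeomorphism onto the branch, so that criticality is independent of the parametrization. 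This holds because $s\mapsto s^{1/p}$ is a homeomorphism for odd $p$ and the resulting map $s\mapsto(s,f(s))$ is $C^1$ with nonzero derivative.
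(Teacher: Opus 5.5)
Your proposal is correct and is essentially the paper's argument: the paper reads off the lowest-order term $-2u_1t^p=-2u_1 s$ in the expansion of $\sq_u$, which is exactly your computation $\frac{d}{ds}\sq_u(\gamma(s))\big|_{s=0}=-2u_1$ in the $C^1$ chart $s=t^p$ supplied by Proposition \ref{p:k-smooth}. One small wording fix: the $C^1$ diffeomorphism onto the branch is the graph chart $s\mapsto(s,f(s))$, not the map $t\mapsto t^p$ (whose inverse is not differentiable at $0$); your justification already proves the former, which is all you need.
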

\begin{proof}
This follows from 
\begin{equation}\label{e:sqd}
\sq_u(t)  = u_1^2 + u_2^2 - 2u_1 t^p + t^{2p}  - 2 c u_2 t^q    -2 u_2 \sigma_{q+1}(t)   + c^2 t^{2q} + \hot  
\end{equation}

\end{proof}

Next we consider  $p = even$: the geometric  singular points.
\begin{definition}
The function $\sq_u$ is called critical at a  geometric singular point $x \in X$ when it is not topologically regular at $x \in X$.

\end{definition}

\begin{prop}\label{l:outofnormal}
Let $x \in X$ be a geometric singular point.
For all points outside the normal  line $N_x = \{u_1=0\}$ the function $\sq_u$ is critical
 in $x$; we have the following types of critical points:
\begin{itemize}
\item $u_1 > 0$ maximum,
\item $u_1 < 0$ minimum.
\end{itemize}
For points on  $N_x$ see Proposition \ref{p:focal}.
\end{prop}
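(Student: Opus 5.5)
Take the branch at $x$ in the normal form \eqref{e:Puis} with $x$ at the origin, $p$ even, and the tangent line equal to the $x_1$-axis, so that $N_x=\{u_1=0\}$. Then use the expansion \eqref{e:sqd} of $\sq_u$ along the normalizing parameter $t$. Since the topological type is read off on the normalization, we only have to decide whether $t=0$ is a local extremum of $t\mapsto \sq_u(t)$, and which kind.

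The key step is to find the lowest-order nonconstant term of $\sq_u(t)-\sq_u(0)$. Because $q>p$ and $\deg\sigma_{q+1}>q$, the candidates are $-2u_1t^p$, $t^{2p}$, $-2cu_2t^q$, $-2u_2\sigma_{q+1}(t)$ and $c^2t^{2q}$. When $u_1\neq0$ every other term has order strictly larger than $p$. Hence
\[
\sq_u(t)-\sq_u(0) = -2u_1t^p\bigl(1+O(t)\bigr).
\]
Since $p$ is even, $t^p\ge 0$, with equality only at $t=0$. So for small $t\ne0$ the sign of $\sq_u(t)-\sq_u(0)$ is the sign of $-u_1$. This gives a strict local maximum at $t=0$ when $u_1>0$ and a strict local minimum when $u_1<0$. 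In particular $\sq_u$ is not top-regular at $x$, so it is critical in the sense of the definition for geometric singular points.

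For full rigour, write the bracket as $1+t\,h(t)$ with $h$ continuous, using that the remaining terms are a convergent power series (or polynomial) in $t$ of order $>p$. Then fix a neighbourhood on which $|t\,h(t)|<1/2$. One could add the remark that the maximum/minimum is topologically equivalent to $\pm t^2$, which follows since $t\mapsto t^p$ with $p$ even is.

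If $X$ has several branches at $x$, apply the argument to each cuspidal branch separately. This uses the lemma that the topological type on each normalized branch is determined by its extrema. The normal $N_x$ here is that of the branch in question; for the statement we read $u\notin N_x$ as $u_1\neq0$ in that branch's coordinates.

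I expect no real obstacle. The only point needing care is checking that no term of order $\le p$ other than $-2u_1t^p$ appears, which relies on $q>p$. On the normal line itself, $u_1=0$, the leading term becomes a competition between $t^{2p}$ and $-2cu_2t^q$. That case is deferred to Proposition \ref{p:focal}.
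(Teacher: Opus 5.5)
Your proof is correct and takes essentially the paper's approach. The paper's proof just says to look at the expansion (\ref{e:sqd}). You make that explicit: for $u_1\neq 0$ the term $-2u_1t^p$ is the lowest-order term (because $q>p$), so evenness of $p$ gives a strict local maximum when $u_1>0$ and a strict local minimum when $u_1<0$.
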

\begin{proof}
Look at formula (\ref{e:sqd})
\end{proof}

\subsection{Focal points on normal lines}
We define the   \textit{\bf competition factor}
 $\rho = \frac{q}{2p}$. It describes  the contact between the squared distance function and the curve at $x$. For generic smooth points the factor is $1$.

 \begin{prop}\label{p:focal}
  Let $x \in X$ be an algebraic singular point and $u \in N_x$, then 
  \begin{itemize}
\item If  $\rho > 1,  \; (q > 2p)$  then $\sq_u$ is a minimum for all $u \in N_x$, there is no focal point on $N_x$,
\item If $\rho = 1, \; (q = 2p)$ then there is a focal point $u \ne x$, where a minimum changes into  a maximum,
\item If $\rho < 1, \;(q < 2p)$ and $u_2 \ne 0$ then $\sq_u$ is top-regular if $q=odd$ and maximum or minimum if $q= even$, $u=x$ is the  single focal point.
\end{itemize}
If $u_1=u_2=0$ then $\sq_u$ has a minimum.

  \end{prop}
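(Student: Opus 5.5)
The proof will be read off from the expansion (\ref{e:sqd}) of $\sq_u$ along the normalized branch, in the parameter $t$ of (\ref{e:Puis}), with $x$ at the origin. On $N_x$ we have $u_1=0$, so the term $-2u_1t^p$ vanishes and
\[
\sq_u(t)-\sq_u(0) = t^{2p} - 2cu_2\,t^q - 2u_2\,\sigma_{q+1}(t) + c^2t^{2q} + \hot .
\]
Both $\sq_u$ and the parametrization are analytic in $t$, and the parametrization is a homeomorphism of a neighborhood of $0$ onto the branch. Hence the local topological type of $\sq_u$ at $x$ is the type of $t\mapsto \sq_u(t)$ at $t=0$. That type depends only on the lowest order nonvanishing term $a t^m$: the point is top-regular if $m$ is odd, a minimum if $m$ is even and $a>0$, and a maximum if $m$ is even and $a<0$. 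The whole proof is then a case analysis, governed by $\rho=q/2p$, of which exponent among $2p$ and $q$ wins the competition. The terms $\sigma_{q+1}$ and $c^2t^{2q}$ have order $>q$ and $>2p$ respectively, so they never produce the leading term, except in the degenerate situation treated in the last case.

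If $\rho>1$, i.e.\ $q>2p$, the leading term is $t^{2p}$ with coefficient $1>0$, whatever $u_2$ is. So $\sq_u$ has a minimum for every $u\in N_x$, the type is constant along the normal, and there is no focal point.

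If $\rho=1$, i.e.\ $q=2p$, the leading term is $(1-2cu_2)t^{2p}$. This gives a minimum for $u_2<1/(2c)$ and a maximum for $u_2>1/(2c)$. So the focal point is $u=(0,1/(2c))\neq x$, and $c>0$ fixes its side. At $u_2=1/(2c)$ itself the type differs from the generic types on either side, which is all the definition of focal point requires. We do not need to determine that type exactly: it depends on higher terms of $\sigma$, which is the only delicate point here.

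If $\rho<1$, i.e.\ $q<2p$, and $u_2\neq0$, the leading term is $-2cu_2t^q$. For $q$ odd this is top-regular. For $q$ even it is a maximum when $u_2>0$ and a minimum when $u_2<0$. When $q$ is odd, $p$ must be even for the branch to be genuinely singular with $q<2p$ (the smooth case $p$ odd, $q<2p$ is the strict $C^1$ situation); the parity argument above does not depend on this.

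Finally, at $u=x$ (that is, $u_1=u_2=0$) every $u_2$-term vanishes and $\sq_u(t)=t^{2p}+\hot$, a minimum. In the case $\rho<1$ the generic types on the two half-lines $u_2>0$ and $u_2<0$ are thus separated by the point $u=x$ with a different type (minimum rather than regular when $q$ is odd; a minimum between maximum and minimum when $q$ is even), so $x$ is the single focal point. The main thing to check is this identification of the focal point when $\rho<1$ and $q$ is even, since $x$ then looks like part of the minimum region. I would handle it by reading the definition as ``type differs from the generic type of a neighboring open segment of $N_x$''. Under that reading, $u=x$ is focal because the type on $u_2>0$ is a maximum while the type at $x$ is a minimum, and it is the only point where the type jumps.
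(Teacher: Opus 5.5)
Your proposal is correct and is essentially the paper's own proof: restrict (\ref{e:sqd}) to $u_1=0$, read off which of $t^{2p}$ and $-2cu_2t^q$ dominates in the three regimes of $\rho$, and note that $u=x$ gives $t^{2p}+\hot$, a minimum. Your treatment of the definition of focal point is more careful than the paper's, which glosses over it; one correction: at $(0,\frac{1}{2c})$ the type is only guaranteed to differ from the type on \emph{one} side, not on both. For instance, the truncation $x_1=t^p$, $x_2=ct^{2p}$ gives $\sq_u-\frac{1}{4c^2}=c^2t^{4p}$, a minimum. This is still exactly what the ``neighboring segment'' reading you adopt at the end requires.
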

\begin{proof}
Next we consider the normal line $u_1$=0;  we get from (\ref{e:sqd}):
\begin{equation}\label{e:normaleq}
 \sq_u(t)  =  u_2^2 + t^{2p}  - 2 c u_2 t^q    -2 u_2 \sigma_{q+1}(t)   +c^2  t^{2q} + hot  .
 \end{equation}
 First consider 
 \fbox{$q < 2p$}: the  term $-2 c u_2 t^q$  dominates and the result now depends on $q$. \\
  If $q = odd$ then we have a topological regular situation if $c u_2 \ne 0$. For $u_2=0$:  we have a minimum: The only focal point is $u=x$.\\
  If $q = even$ then we have a maximum if $c u_2 >0$; a minimum if $c u_2\le  0$. The only focal point is $u=x$.\\
\fbox{$q > 2p$}: the term $t^{2p}$ is dominant for all $u_2$. The type is minimum for all points on the normal. No focal points.\\
\fbox{ $q =2p$}:
 The dominant term in (\ref{e:normaleq})  is $(1-2 c u_2) t^{2p}$. This means if $2 c u_2 < 1$,  then a minimum; if $2c  u_2 > 1$, then  a maximum;  and if $2 c u_2 =1$ a focal point at $(0,\frac{1}{2c})$.
 So on this normal we have a finite focal point, which is center of curvature (also at geometric singular points !).

 So this situation looks very much the same as in the smooth $C^2$-case:
  If $q \ge 2p$ the the conclusion follows from the $C^2$ statements.
  \end{proof}

\begin{theorem}\label{t:NinD} 
Let $x \in X$ be an algebraic singular point.
The normal  line $N_x$ is contained in  the discriminant if
\begin{itemize}
\item  $x$ has $p$ and $q$  odd  with $\rho < 1$ , or
\item $x$ is a geometric singular point.
\end{itemize}
The limit of the focal set is\begin{itemize}
\item the point $u=x$ if $\rho < 1$, 
\item the finite focal point $u \ne x$ on $N_x$ if $\rho =1$ and 
\item the infinite point on $N_x$ if $\rho > 1$.
\end{itemize}

This is summarized in Table \ref{table:10cases}, where we also give details about bifurcations of critical points.
\end{theorem}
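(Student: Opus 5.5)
The plan is to compare the local topological type of $\sq_u$ at $x$ for $u$ on $N_x$ with its type for $u$ just off $N_x$. We use formula (\ref{e:sqd}) in the local parametrization (\ref{e:Puis}), together with the classification already obtained in Propositions \ref{p:oddcrpt}, \ref{l:outofnormal} and \ref{p:focal}. Since the tED-discriminant is the complement of the topologically stable points, it suffices to show the following: for every $u\in N_x$ (with the stated hypotheses), arbitrarily close points $v$ yield a function $\sq_v$ with a different number of maxima and minima on the normalized branch near $x$. By the Lemma, that count determines the topological type.

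\textbf{Geometric singular points ($p$ even).} By Proposition \ref{l:outofnormal}, $\sq_v$ has a maximum at $x$ for $v_1>0$ and a minimum for $v_1<0$. Hence any $u$ with $u_1=0$ has nearby points of both types, and so $u$ is unstable. One point needs care. Globally on the curve, the local critical point at $x$ could in principle be compensated by critical points created nearby. But the type at $x$ itself changes from maximum to minimum, and this cannot be undone by a homeomorphism of the branch fixing the local picture. So $N_x\subset\Delta_{tED}$.

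\textbf{Smooth points with $p,q$ odd and $\rho<1$.} For $u\in N_x$ with $u_2\neq 0$, Proposition \ref{p:focal} gives that $\sq_u$ is top-regular at $x$. Off the normal, Proposition \ref{p:oddcrpt} says $x$ is non-critical, so we must look at nearby parameters $t$. The critical equation $\partial_t\sq_v=0$ reads
\[
-2pv_1t^{p-1}+2pt^{2p-1}-2cqv_2t^{q-1}+\dots=0 .
\]
For small $v_1$ we balance $v_1t^{p-1}$ against $cv_2t^{q-1}$. This gives $t^{q-p}\sim -pv_1/(cqv_2)$, and since $q-p$ is even there are two real solutions $t$ when $v_1/(cv_2)<0$ and none otherwise. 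Thus on one side of $N_x$ a max/min pair is born near $x$, and on the other side there is none. This \emph{birth of a pair} is the step I expect to be the main obstacle. Proving it needs a Newton-polygon or weighted-homogeneous scaling $t=\lambda s$, $v_1=\lambda^{q-p}w$, which shows that the higher-order terms do not destroy the two simple roots. The same scaling also locates the focal set of nearby smooth points.

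\textbf{Limit of the focal set.} For a nearby parameter value $t\neq 0$ the curve is $C^\infty$, with curvature computed from $x_1=t^p$, $x_2=ct^q+\dots$. The curvature is
\[
\kappa(t)\sim \frac{q(q-p)c\,t^{q-2p}}{p}\cdot(\text{unit}),
\]
so the radius of curvature behaves like $t^{2p-q}$. It tends to $0$ if $\rho<1$, so the focal points tend to $x$. It tends to the constant $1/(2c)$ if $\rho=1$, which agrees with the focal point $(0,\tfrac1{2c})$ of Proposition \ref{p:focal}; the finite limit follows from the factor $q(q-p)/p^2\cdot p/(\dots)$ simplifying, which I will check directly from the leading terms. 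It tends to $\infty$ if $\rho>1$. The normal lines at $t$ converge to $N_x$, so the centers of curvature converge to the indicated point of $N_x$, possibly the point at infinity. The bifurcation details for Table \ref{table:10cases} come from the same scaled critical equation, treated case by case according to the parities of $p,q$ and the sign of $\rho-1$.
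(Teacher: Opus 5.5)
\textbf{There is a gap in your geometric-singular case.} You conclude instability because $x$ is a maximum of $\sq_v$ for $v_1>0$ and a minimum for $v_1<0$, and you dismiss compensation by nearby critical points. That compensation does occur when $p$ is even, $q$ is odd and $q<2p$. This subcase includes the ordinary $(2,3)$ cusp.

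In this subcase the reduced critical equation $pv_1+qcv_2t^{q-p}+\dots=0$ has exactly one nonzero root $\hat t$ near $0$, because $q-p$ is odd. This critical point is of the type opposite to that at $x$, and $\hat t$ changes sign with $v_1$. For example, take $x_1=t^2$, $x_2=t^3$, $v_2=1$, so $\hat t\approx -2v_1/3$:
\begin{itemize}
\item for $v_1>0$ there is a minimum at $\hat t<0$ and a maximum at $0$;
\item for $v_1<0$ there is a minimum at $0$ and a maximum at $\hat t>0$.
\end{itemize}
In both cases $\sq_v$ is decreasing, increasing, decreasing near $x$. So both sides have local type $M+m$. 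By the very Lemma you invoke they are topologically equivalent; the homeomorphism of the normalized branch simply does not fix $t=0$. This is the entry $M+m\leftrightarrow M+m$ of Table \ref{table:10cases}, where the paper notes explicitly that there is no type change in the complement.

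What puts $N_x$ in the discriminant here is the point $u$ itself. For $u_1=0$, $u_2\neq 0$ the derivative is $-2t^{q-1}\bigl(qcu_2+o(1)\bigr)$. Its only zero near $0$ has even multiplicity $q-1$, so $\sq_u$ is monotone near $x$ with no critical points there. Every nearby $v\notin N_x$ has two.

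Your own stated criterion, comparing $\sq_v$ with $\sq_u$, is the right one. To apply it you must run the root analysis of the scaled critical equation in the $p$-even cases as well. In the remaining $p$-even subcases that analysis gives $2M+m\leftrightarrow M$ and $m\leftrightarrow 2m+M$, so your conclusion is correct there. However, it does not follow from Proposition \ref{l:outofnormal} alone, which only sees $t=0$ and says nothing about how many critical points appear nearby.

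\textbf{The other parts.} Your treatment of $p,q$ odd with $\rho<1$ (a pair born on one side of $N_x$, none on the other) is fine.

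Your curvature route to the focal limits is a legitimate alternative to the paper's. The paper instead differentiates the reduced critical equation in $t$ and solves for $(u_1,u_2)$ as in (\ref{e:u1})--(\ref{e:u2}), which additionally gives the cusp type $(2p-q,p)$ of the caustic. Fix your constant, though: $|\kappa(t)|\sim q(q-p)|c|\,|t|^{q-2p}/p^2$, with $p^2$ rather than $p$. Only then does $q=2p$ give radius $1/(2|c|)$, matching the focal point of Proposition \ref{p:focal}.
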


\begin{table}[ht]
\centering
\begin{tabular}{|l|c|c|c|c|}
\hline
    & $\rho < 1$      &  $\rho = 1$    &  $\rho > 1$  \\
\hline
 $p = odd$ ; $q= odd$       & $q < 2p$ & $q=2p$ & $ q> 2p$\\
\hline
$N \subset$ Discriminant &  YES  & &  NO  \\
Type-change & $M + m \leftrightarrow \emptyset$ &  &  $m$ or $M$ persists\\
Focal point    & origin &   & no\\
Focal set limit & origin &  & infinity \\
\hline
 $p = odd$ ; $q= even$       & $q < 2p$  &   $q=2p$   & $ q> 2p$\\
\hline
$N \subset$ Discriminant & NO & NO  & NO  \\
Type-change & $m$ or $M$ persists &  $m$ or $M$ persists  &   $m$ or $M$ persists\\
Focal point    & origin &  point on $N$  & no\\
Focal set limit & origin & focal point & infinity \\
\hline
 $p = even$; $q =odd$       & $q < 2p$  &   $q=2p$ & $ q> 2p$\\
\hline
$N \subset$ Discriminant &  YES  &  & YES \\
Type-change & $M + m \leftrightarrow M+m$ &   &   $m \leftrightarrow 2m+M $ \\
Focal point    & origin &   & no\\
Focal set limit & origin &  & infinity \\
\hline
 $p = even$ ; $q= even $      & $q < 2p$   &   $q=2p$   & $ q> 2p$\\
\hline
$N \subset$  Discriminant & YES & YES & YES\\
Type-change & $2M + m \leftrightarrow M$ &  $2M + m \leftrightarrow M $   &   $2M + m \leftrightarrow M $ \\
Focal point    & origin & point on $N$ & no\\
Focal set limit & origin & focal point & infinity \\
\hline
\end{tabular}
\caption{The 10 cases; the table refers to $c > 0$. Depending on the sign of $u_2$ sometimes $m$ (minimum) or $M($ maximum) has to be interchanged.}
\label{table:10cases}
\end{table}

\begin{proof}
The starting point in the proof is the expression:
\begin{equation}\label{e:sqd2}
\sq_u(t)  = u_1^2 + u_2^2 - 2u_1 t^p + t^{2p}  - 2 c u_2 t^q    -2 u_2 \sigma_{q+1}(t)   + c^2 t^{2q} + \hot  
\end{equation}

For sake of exposition we will assume, that the higher order terms are zero. These terms cause higher order effects in the algebraic computation, but no qualitative expects for the local topology and geometry. We will comment on the case $p = even$ $q = even$ in Remark  \ref{r:even-even}.

 Critical points of $\sq_u$  satisfy $\frac{d}{dt}(\sq_u )= 0$ and therefore the following 
orthogonality condition  between the tangent line and Euclidean distance $\sq_u$ holds:
\begin{equation} \label{e:normality-0}
(u_1 -t ^p) p t^{p-1} + (u_2 - c t^q) q c t ^{q-1}= 0.
\end{equation}

Expanding $\frac{d}{dt}(\sq_u )$  gives
\begin{equation}  \label{e:calculus}
  - 2 t^{p-1}  (p  u_1 -p t ^{p}   + q c u_2 t ^{q-p} - q c ^2 t ^{2q-p}).
\end{equation}
This has anyhow a root of multiplicity $p-1$ at $t=0$.

Standard sign rules at the zero's of (\ref{e:calculus}) determine the type of extrema of $\sq_u$ and their bifurcations when we fix $u_2 \ne 0$ and  let $u_1$ move.
Taking limits $u_1 \to 0$ will involve the dominant terms in the second factor. 

Consider three cases, depending on the value of $\rho$: \\
\fbox{$\rho <1$} The dominant terms in  the derivative (\ref{e:calculus})  are:
\begin{equation}
- 2 t^{p-1}  (p  u_1  + q c u_2 t ^{q-p} )
\end{equation}
\fbox{$p = odd$ ; $q = odd$} Roots $t \ne 0 $ are given by $t^{even} = - pu_1 /  qcu_2$ . If $u_1cu_2 < 0$ then there are two roots $t_{-}, t_{+}$ with different signs. If $u_1cu_2 >  0$  no roots. At $t=0$  $\sq_u$ is top-regular. As a result there is  birth or death of a maximum (M) and minimum (m). Therefore $N_x$ belongs to the discriminant.\\
\fbox{$p = odd$ ; $q = even$}  Roots $t \ne 0 $ are given by $t^{odd} = - pu_1 /  qcu_2$ . If $u_1cu_2 \ne  0$ then there is always one root $\hat{t}$, which is positive or negative, depending on the sign of $c u_2u_1$. At $t=0$  maximum or minimum. Conclusion: Maximum or minimum survives. Therefore $N_x$ does not belong to the discriminant.\\
\fbox{$p = even$ ; $q = odd$.} The roots are as in $p = odd$ ; $q = even$. The difference is that $t=0$ is now a minimum or a maximum. This induces $M+m \leftrightarrow M+m$ for $u_1 \ne 0$, but at $u_1=0$ we have top-regular; therefore $N_x$ belongs to the discriminant, but there is no type change in the complement!\\
\fbox{$p = even$ ; $q = even$.} The roots are as in $p = odd$ ; $q = odd$. The difference is that $t=0$ is now a minimum or a maximum. This induces $2M+m \leftrightarrow M$ (or  $m+2M  \leftrightarrow M$); therefore $N_x$ belongs to the discriminant.\\
\fbox{$\rho >1$} The dominant terms in  the derivative (\ref{e:calculus})  are:
\begin{equation}
- 2 t^{p-1}  (p  u_1  - p t ^{p} )
\end{equation}
\fbox{$p = odd$} Roots $t \ne 0 $ are given by $t^{p} = u_1$.
If $u_1\ne  0$ then there is always one root $\hat{t}$, which is positive or negative, depending on the sign of $u_1$; but a minimum in both cases. Also $t=0$ is a minimum. Conclusion: minimum survives. Therefore $N_x$ does not belong to the discriminant.\\
\fbox{$p = even$} Roots $t \ne 0 $ are given by $t^{p} = u_1$. If $u_1 > 0$ there are 2 roots with different signs, which give both a minimum. In between $t=0 $: a maximum. If $u_1 < 0$ there is a single minimum 
 This induces $m \leftrightarrow 2m+M$ for $u_1 \ne 0$, therefore $N_x$ belongs to the discriminant.\\
\noindent
\fbox{$\rho =1$} The dominant terms in the derivative (\ref{e:calculus})  are:
\begin{equation}
- 2 t^{p-1}  (p  u_1  - t ^{p}  + q c u_2 t^{q-p}) = - 2 t^{p-1}  (p  u_1  - (1   - 2p c u_2 ) t^{p})
\end{equation}
\fbox{$p = odd$}. The root at $t=0$ has even multiplicity: so top-regular. Roots from the other factor satisfy $(1-2 cu_2) t^{odd} = u_1$.  There will be exactly 1 critical point $\hat{t}>0$ or $\hat{t}< 0$, depending on the sign of $u_1 (1 -2cu_2)$. For all $t$ near $0$ there is one critical point of constant type (maximum or minimum depending on signs of $u_1,u_2,c$).
Conclusion: $N_x$ does not belong to the discriminant.\\
\fbox{$p=$even}. The root at $t=0$ has odd multiplicity: so maximum or minimum. Roots from the other factor satisfy $(1-2 cu_2) t^{even} = u_1$. 
Depending on the sign of $u_1 (1 -2cu_2)$ there will be exactly 2 critical points $\hat{t}>0$ and $\hat{t}< 0$ of the same type   or no critical points. The type changes see table.  
Conclusion: $N_x$  belongs to the discriminant.\\

\noindent
Next we turn to the limit of the focal set, when $t \to 0$.
After dividing  (\ref{e:calculus}) by $t^{p-1}$ we get the equation:
\begin{equation}\label{e:calculus-red}
 p u_1 -  p t^p + q  c u_2 t ^{q-p} -  q c^2  t ^{2q-p} = 0.
\end{equation}
To get focal points we differentiate to $t$:
\begin{equation}\label{e:focals-red}
- p^2 t^{p-1} + q (q-p) c  u_2t^{q-p-1} - q (2q-p) c^2 t^{2q -p-1} = 0.
\end{equation}

\fbox{case $q < 2p$:}  Divide  (\ref{e:calculus-red}) by $t^{q-p-1}$: 
\begin{equation}\label{e:focals-small} 
-p^2 t^{2p-q} + q (q-p)c  u_2 - q (2q-p) c^2 t^q = 0.
\end{equation}
In this case the closure of the caustic (evolute) contains the origin. 
This also follows (in a more general context) from theorem 3.13 in \cite{BD}.

After re-arranging formula's (\ref{e:calculus-red}) and (\ref{e:focals-small}) we get a parametrization:
\begin{equation}\label{e:u1}
p (q-p) u_1 = p(q-2p) t^p - q^2 t^{2q-p},
\end{equation}
\begin{equation}\label{e:u2}
q (q-p) c u_2 = p^2 t^{2p-q} + q (2q-p)  c^2 t^q,
\end{equation}
where the exponents are in increasing order:
 $2p-q < p < q < 2q-p$. \\
This shows that (\ref{e:u1}),(\ref{e:u2}) satisfy an equation $\Psi (u_1,u_2) = 0$ , where the principal monomials are $u_1^{2p-q}, u_2^{p}$. So we get a cusp of type $(2p-q, p)$ from a cusp of type $(p,q)$.\\
Note that the caustic is smooth when $2p = q+1$.\\
\fbox{case $q > 2p$:} Divide (\ref{e:calculus-red})  by $t^{p-1}$: 
\begin{equation}\label{e:focals-big}
 -p^2 + q (q-p) c  u_2 t^{q -2p}- q (2q-p) c^2 t^{2q-2p }= 0.
\end{equation}
The equations (\ref{e:calculus-red}) and (\ref{e:focals-big})  describe the closure of the caustic. This closure does not contain the origin. After re-arranging these formula's we get:
\begin{equation}\label{e:u1-big}
p (q-p) u_1 = p(q-2p) t^p - q^2 t^{2q-p},
\end{equation}
\begin{equation}\label{e:u2-big}
q (q-p) c u_2 = p^2 \frac{1}{ t^{q-2p}} + q (2q-p)  c^2 t^q.
\end{equation}
This shows that if $t \to 0$ then the limit is infinity, asymptotic to the normal line $u_1=0$.

\begin{figure}[b]
\includegraphics[width=13.5cm]{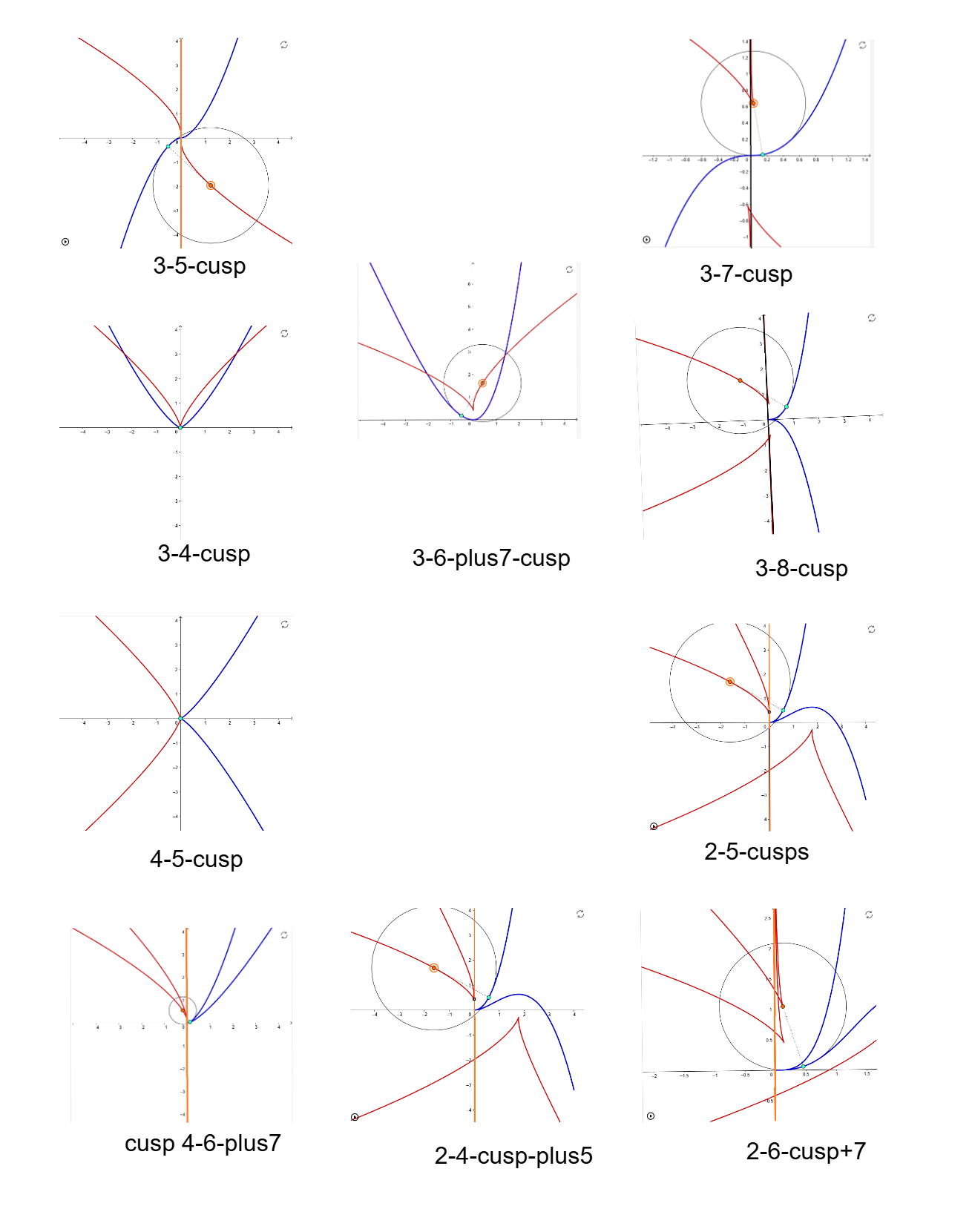}
\caption{Ten types of discriminants.\label{fig:TenCases}\\
\small{The composition of pictures corresponds with Table \ref{table:10cases}; the notation `plus7' means that an extra term $t^7$ is added, etc.}}
\end{figure}

\fbox{Case $q = 2p$:}  Divide (\ref{e:calculus-red})  by $t^{p-1}$:
and  equation (\ref{e:u2-big}) becomes
\begin{equation}
- p^2 + 2p^2 c  u_2 = 0 \; \; ;  \;  2 u_2 = 1.
\end{equation}
There is a single focal point  $(0,\frac{1}{2})$.
Formula's  (\ref{e:u1-big}),(\ref{e:u2-big}) define a $(2,3) $ cusp:
\begin{equation}
p ^2 u_1 =  - 4p^2 t^{3p} \;  
\end{equation}
\begin{equation}
2 p^2 c u_2 = p^2 + 6 p^2 t^{2p}
\end{equation}
\end{proof}

\begin{remark}\label{r:even-even}
$X$ was approximated by cutting off higher order terms. If $q=2p$ the corresponding curve becomes  a non-reduced parabola, or a part of it.  In case $ p= even$ a part of parabola is double covered. The same happens with the caustic of the approximation. 
We perturb the example slightly to discover the effect of higher order terms (e.g.):
$$ x = t^p  \; , y=  c t^{2p} + t^{2p+1}.$$
and it follows (after a computation), that also the new parametrization gives a caustic with  a (2,3) cusp.

\end{remark}

\begin{cor}
{\bf  From the proof of Theorem \ref{t:NinD} about caustics:}
\begin{itemize}
\item If $\rho < 1$: The caustic get a cusp of type $(2p-q, p)$ from a cusp of type $(p,q)$ of $X$ at $u=x$,
\item  $\rho = 1$:  We get a   $(2,3) $ cusp at $u \ne x$,
\item  $\rho >1$:   The limit of the caustic is infinity, asymptotic to the normal line $u_1=0$.
\end{itemize}
\end{cor}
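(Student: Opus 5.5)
The plan is to read the Corollary off the parametrization of the closure of the caustic obtained in the proof of Theorem \ref{t:NinD}, namely equations (\ref{e:calculus-red}) and (\ref{e:focals-red}). Solving these two linear equations for $(u_1,u_2)$ as functions of $t$ gives (\ref{e:u1}),(\ref{e:u2}). I would then analyse the resulting parametrized curve $t\mapsto (u_1(t),u_2(t))$ near $t=0$, separately in the three regimes of $\rho$. In each regime the task is to identify the leading exponents and pass to a Puiseux-type normal form, exactly as in (\ref{e:Puis}). For the moment I keep the higher-order terms suppressed, as in the proof of the theorem.

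\textbf{Case $\rho<1$.} In (\ref{e:u1}),(\ref{e:u2}) the lowest-order term of $u_2$ is $t^{2p-q}$ with coefficient $p^2/(q(q-p)c)\neq 0$. The lowest-order term of $u_1$ is $t^p$ with coefficient $(q-2p)/(q-p)\neq 0$, since $q\neq 2p$. Both tend to $0$, so the caustic closes up at $u=x$. Because $0<2p-q<p$, the tangent direction is the $u_2$-axis, i.e.\ the normal $N_x$. Swapping the roles of the coordinates puts the parametrization in the form (\ref{e:Puis}) with exponents $(2p-q,p)$. I would then eliminate $t$, for instance via the Newton polygon or a resultant, to see that the principal monomials of $\Psi$ are $u_1^{2p-q}$ and $u_2^{p}$. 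This is a cusp of type $(2p-q,p)$; if $2p-q$ is odd it is in fact smooth, by Proposition \ref{p:k-smooth}.

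\textbf{Case $\rho=1$.} Here $u_2\to \frac{1}{2c}$, which is the focal point of Proposition \ref{p:focal}, and $u_2-\frac1{2c}\sim t^{2p}$ while $u_1\sim t^{3p}$. As a germ, the leading exponents $(2p,3p)$ are those of an ordinary $(2,3)$ cusp. The parameter $s=t^p$ reparametrizes it, but this is a $p$-fold (for $p$ even, a $2$-to-$1$) covering. This is precisely the degeneracy flagged in Remark \ref{r:even-even}.

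\textbf{Case $\rho>1$.} Equation (\ref{e:u2-big}) gives $u_2\sim t^{-(q-2p)}\to\infty$, and (\ref{e:u1-big}) gives $u_1\to 0$. So the caustic escapes to infinity asymptotically to the line $u_1=0$, which is $N_x$.

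The main obstacle is justifying that the higher-order terms $\sigma_{q+1}$ do not change the conclusions. For $\rho\neq1$ I would argue by weighted-homogeneous dominance. With weights making $t$ of weight $1$, the perturbation only adds terms of strictly higher weight to each of $u_1$ and $u_2$. Hence the leading exponents, and therefore the Newton polygon of $\Psi$, are unchanged, so the cusp type and the asymptotics survive. For $\rho=1$ this dominance argument breaks down, because the truncated curve is a non-reduced parabola. There I would carry out the perturbed computation indicated in Remark \ref{r:even-even}, with $y=ct^{2p}+t^{2p+1}$, and check that the first non-vanishing terms of $u_1$ and $u_2-\frac1{2c}$ still have exponents in ratio $2:3$, after reparametrizing so that the map is generically injective. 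That check would establish the $(2,3)$ cusp at $u\neq x$ in general.
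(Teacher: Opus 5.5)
\paragraph{Review.} For $\rho<1$ and $\rho>1$ your argument is the paper's: you read off (\ref{e:u1}),(\ref{e:u2}) and (\ref{e:u1-big}),(\ref{e:u2-big}). Your weighted-dominance justification for the higher-order terms is sound in these two cases. The coefficient $\frac{q-2p}{q-p}$ of $t^p$ in $u_1$ and the coefficient $\frac{p^2}{q(q-p)c}$ of $t^{2p-q}$ in $u_2$ are nonzero, so $\sigma_{q+1}$ only adds terms of order $t^{p+1}$ and $t^{2p-q+1}$.

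The gap is at $\rho=1$. The check you propose there would not give ratio $2:3$; it would refute the claim.

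Dominance fails here because of cancellation, not non-reducedness: for $p$ odd the model $x_2=cx_1^2$ is an honest parabola. At $q=2p$ the $t^p$-term of $u_1$ has coefficient $q-2p=0$, and $u_2$ has a constant leading term. So the surviving model terms $t^{3p}$ and $t^{2p}$ are of \emph{higher} order than what $\sigma$ contributes.

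Concretely, take $x_2=g(t)=ct^{2p}+d\,t^{2p+1}+\dots$ and eliminate $u_1$ between $\frac{d}{dt}\sq_u=0$ and its $t$-derivative. This gives
\[
(u_2-g)\Bigl(g''-\tfrac{p-1}{t}\,g'\Bigr)=p^2t^{2p-2}+g'^2,\qquad g''-\tfrac{p-1}{t}\,g'=2p^2c\,t^{2p-2}+(p+1)(2p+1)d\,t^{2p-1}+\dots,
\]
and hence
\[
u_2-\tfrac{1}{2c}=-\tfrac{(p+1)(2p+1)d}{4p^2c^2}\,t+O(t^2),\qquad u_1=\tfrac{(2p+1)d}{2p^2c}\,t^{p+1}+O(t^{p+2}).
\]
So for $d\neq0$ the map $t\mapsto(u_1,u_2)$ is an immersion at $t=0$. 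This includes the perturbation $y=ct^{2p}+t^{2p+1}$ of Remark \ref{r:even-even}. Near the focal point the caustic is therefore a smooth arc through $(0,\frac1{2c})$, tangent to $N_x$ with contact of order $p+1$, and there is no cusp there.

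For $p=1$ this is the classical fact that the evolute is smooth away from vertices. The model parabola has its vertex exactly at $x$, and that is where the $(2,3)$ cusp comes from.

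What you can prove in general is only that the caustic tends to the focal point $(0,\frac1{2c})\neq x$. The $(2,3)$ cusp is a property of the truncated model: a genuine cusp for $p$ odd, and for $p$ even only a doubly covered half-cusp, as you noted. It is not stable under generic higher-order terms.

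The paper's own proof also computes only the truncated model. Its Remark \ref{r:even-even} asserts the perturbed case without giving the computation, and the computation above contradicts that assertion. So the second bullet has to be restricted to the model, not ``established in general'' as your last step proposes.
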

Compare also \cite{Zhang-Pei}, where an involute-evolute duality was studied.

\subsection{The discriminant}
\begin{theorem}\label{t:top-discr}
The topological ED-discriminant $\Delta_{tED}$ consists of the caustic (evolute), together with all normal lines at  geometric singular points  and the normal lines at strict $C^1$-points of type $q= odd$. 

\end{theorem}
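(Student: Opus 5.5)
The plan is to prove two inclusions by localizing to one point of $X$ at a time. The topological type of $\sq_u$ on the compact (or properly handled) normalization of $X$ is determined by the finitely many critical points together with their types. By the Lemma on branches, the type is fixed by the number of maxima and minima. So $u$ is stable exactly when, for $v$ near $u$, the set of topological critical points of $\sq_v$ moves continuously and neither changes cardinality nor changes type. First I will argue that critical points of $\sq_v$ for $v$ near $u$ can only accumulate at critical points of $\sq_u$. Away from these the function is top-regular: at smooth points by Proposition \ref{p:oddcrpt} and the $C^1$ regularity theorem of \cite{CE}, and at geometric singular points by Proposition \ref{l:outofnormal}, which shows criticality is an open condition there. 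Hence stability of $u$ reduces to stability of each germ of $\sq_u$ at a point $x$ with $u\in N_x$.

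For the inclusion $\supseteq$ I treat the three parts in turn. If $u$ is on the caustic at a $C^2$-point $x$, Milnor's index statement recalled above applies. Crossing the focal point changes the index, so a minimum turns into a maximum or a pair $M+m$ is born or dies, and in either case $u$ is unstable. For focal points at algebraic singular points (the case $\rho=1$, or $u=x$ when $\rho<1$), Proposition \ref{p:focal} shows the type on $N_x$ changes at $u$, so these points are unstable as well. If $x$ is a geometric singular point, or a strict $C^1$-point with $p,q$ odd, then Theorem \ref{t:NinD} and Table \ref{table:10cases} give a change of local type across $N_x$. The possible changes are $M+m\leftrightarrow\emptyset$, $m\leftrightarrow 2m+M$, or $2M+m\leftrightarrow M$. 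In the case $p$ even, $q$ odd, $\rho<1$ the counts agree on both sides, but the type on $N_x$ itself is top-regular. So every $u\in N_x$ is unstable.

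For $\subseteq$, take $u$ outside the caustic and outside these distinguished normals. At each relevant $x$ with $u\in N_x$, I have to show the germ persists for nearby $v$. If $x$ is $C^2$ and $u$ is not focal, the germ is Morse, and by the implicit function theorem it persists with the same index. If $x$ is algebraic singular with $p$ odd and $q$ even, or $p$ odd and $\rho\ge 1$ with $u$ not the focal point, Table \ref{table:10cases} says ``$m$ or $M$ persists'': a single extremum moves off $x$ but keeps its type. The cases with $p$ even, or with $p,q$ odd and $\rho<1$, cannot occur because $u$ is not on those normals. Combining with the localization step gives a local topological equivalence of $\sq_v$ and $\sq_u$ for $v$ near $u$.

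The main obstacle is making the localization rigorous: passing from germwise persistence to a global topological equivalence, and ruling out critical points coming in from elsewhere, for instance from infinity on noncompact branches. I would handle this by restricting to a large compact region where $\sq_v$ is proper, and by citing that $C^0$ equivalence of functions on a 1-dimensional space is governed by the ordered sequence of extrema along each branch. A second subtle point is the remark on higher order terms: Theorem \ref{t:NinD} was computed for truncated parametrizations. I would rely on the dominant-term argument there being robust, since the sign patterns of (\ref{e:calculus}) depend only on leading exponents.
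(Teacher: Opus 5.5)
Your proposal is correct and follows the paper's route: the paper proves this theorem simply as a corollary of Theorem \ref{t:NinD} and its proof, i.e.\ the local case analysis behind Table \ref{table:10cases} together with the location of the focal points. Your two-inclusion argument with the localization step is an expanded version of exactly that, with one small slip: critical points of $\sq_v$ can accumulate at a point where $\sq_u$ is top-regular, namely a cusp with $p$ even, $q$ odd, $\rho<1$ and $u\in N_x$, since $x$ itself becomes critical for every $v\notin N_x$; so the reduction should be phrased in terms of the points $x$ with $u\in N_x$ rather than the critical points of $\sq_u$, which is the form you in fact end up using.
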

\begin{proof}
This is a corollary of Theorem \ref{t:NinD} and its proof.
\end{proof}

\subsection{Multiple intersections}
In case several local branches intersect in $x \in X$, there exists  for  each of these branches a well defined focal point (perhaps at infinity). This holds not only for smooth branches, but also for the singular branches. In case several normal lines coincide then there can be several focus points on the same line.

Let $x \in X$ be an intersection point of several local branches, Let $N_x$  be the union of all normal lines $N_{x,\Gamma}$ to these  branches, then:
\textit{If $u \notin N_x$ then $\sq_u$ us topologically regular at $x \in X$. }\\
The effect of including multiple intersections is that $\Delta_{tED}$t will be extended by $N_x$ for every multiple point $x \in X$.
We don't give details.

\subsection{Counting critical points}
After the local studies we now consider global behavior. 
We will  apply (topological) Morse theory. 
We consider for a single branch a topological component of its normalization $\gamma: \mathbb{R} \to \mathbb{R}^2 \; \mbox{\rm or} \; S^1  \to \mathbb{R}^2$. We will assume in the rest of the manuscript, that \textbf{$X$ is  compact and has a single component}, i.e. the source is $S^1$, unless stated differently. We leave the other case to the reader.

In the smooth case we call the interval $[xu]$ the normal from $u$ to $x \in X$ if $x$ a critical point of $\sq_u$. This interval is a subset of the normal line $N_x$. We extend this definition to the case that  $x$ is a geometric singular point of $X$ and use the name s-normal  if $x$ is  a topological critical point of $\sq_u$. This may look a bit strange, since no longer $[xu] \subset N_x$ is valid; but $[xu]$  lies in the normal cone at $x$,  if one extends properly the PL-definition of normal cone to our case. \\

For any $u \notin \Delta_{tED}$ we count the number of critical point of $\sq_u$. Notation $N(u)$. 
In this way we are counting \textit {all normals} through $u$.\\
Critical points $x \in X$ of $\sq_u$ determines normals $xu$ of two types
\begin{itemize}
\item  the (traditional) normal  $xu$  if $x \in X^{reg}$  
\item  s-normals  $xu$ if $x \in X$ is a geometric singular point of $X$.
\end{itemize}

Let $m(u)$ the number of minima ; $M(u)$ the number of maxima. 
Any cuspidal point $x \in X$ contributes one critical point for any $u \notin \Delta_{tED}$. 
Let $m_{\prec}(u)$ the number of minima at cuspidal points ; $M_{\prec}(u)$ the number of maxima at the cuspidal points; $N_{\prec}(u)$ the total number of cuspidal points.
\begin{prop}\label{p:count-normals}
For any $u \notin \Delta_{tED}$  we have:
\begin{itemize}
\item The total number of normals $N(u)  = M(u) + m(u) \ge 2$,
\item Maxima and minima: $m(u)=M(u) \ge 1$ 
\item Each cuspidal point contributes to at least one normal:\\
$ N(u)   \ge M_{\prec}(u) + m_{\prec}(u) + | M_{\prec}(u)-m_{\prec}(u)|  \ge  N_{\prec}(u)$
\end{itemize}
\end{prop}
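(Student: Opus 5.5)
The plan is to reduce everything to the topology of a continuous function on a circle. By the standing assumption, $X$ is compact with a single component, so its normalization is $\gamma: S^1 \to \mathbb{R}^2$. For $u \notin \Delta_{tED}$ we study $f = \sq_u\circ\gamma : S^1\to\mathbb{R}$.

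The first step is to check that $f$ has only finitely many topological critical points, each a strict local maximum or minimum. At smooth points this follows from Proposition \ref{p:oddcrpt} and the computations in the proof of Theorem \ref{t:NinD}. At cuspidal points it follows from Proposition \ref{l:outofnormal} and Proposition \ref{p:focal}. The condition $u\notin\Delta_{tED}$ rules out the degenerate situations: $u$ is off the caustic, off the normal lines at cusps, and off the normal lines of strict $C^1$-points of type $q$ odd. Finiteness comes from algebraicity, since the equation (\ref{e:normality-0}) is polynomial in the local parameter and there are only finitely many singular points.

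The counting of the first two items is then elementary Morse theory on $S^1$. Every other point is top-regular, so $f$ is locally monotone there. Along the circle, local maxima and minima must therefore alternate: between two consecutive maxima $f$ cannot be monotone, so a minimum lies between them. Hence $m(u)=M(u)$. Since $S^1$ is compact, $f$ attains a global maximum and a global minimum. These are distinct points, because $f$ is not constant ($X$ is a curve, not contained in a circle around $u$, as $u$ is generic). This gives $m(u)=M(u)\ge1$ and $N(u)=M(u)+m(u)\ge 2$.

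For the third item, recall from Proposition \ref{l:outofnormal} that each cuspidal point is a maximum or a minimum of $f$ when $u\notin N_x$. Therefore $N_{\prec}(u)=M_{\prec}(u)+m_{\prec}(u)$, and the last inequality is immediate. For the first inequality, write $N(u)=2M(u)=2m(u)\ge 2\max(M_{\prec}(u),m_{\prec}(u))$. This equals $M_{\prec}(u)+m_{\prec}(u)+|M_{\prec}(u)-m_{\prec}(u)|$, since $2\max(a,b)=a+b+|a-b|$. The interpretation is that the surplus of cuspidal maxima over cuspidal minima, or vice versa, must be compensated by ordinary normals at regular points, forced by the alternation.

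The main obstacle is the first step: making sure that for $u$ outside $\Delta_{tED}$ no degenerate critical points or flat pieces occur. A topologically regular point between extrema is needed for the monotonicity argument. For this I would rely on the local normal forms of Theorem \ref{t:NinD}, which apply at each of the finitely many algebraic singular points, together with the classical $C^2$ Morse theory at the remaining smooth points. If multiple intersection points are present, one would also exclude the lines $N_x$ at those points, as remarked in the subsection on multiple intersections. Since the normalization separates the branches, the argument on $S^1$ is then unaffected.
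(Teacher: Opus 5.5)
Your proof is correct and follows the paper's intended route. The paper gives no written proof beyond invoking topological Morse theory on the normalization $S^1$ and noting that, for $u\notin\Delta_{tED}$, every cuspidal point is a maximum or minimum (Proposition \ref{l:outofnormal} together with Theorem \ref{t:NinD}); your alternation argument and $N(u)=2\max(M(u),m(u))\ge 2\max(M_{\prec}(u),m_{\prec}(u))$ fill this in. One loose spot: equation (\ref{e:normality-0}) is only the truncated local model, so it does not justify finiteness, but finiteness holds anyway because $\sq_u\circ\gamma$ is real-analytic and non-constant on the compact circle, hence its derivative has finitely many zeros.
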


We next discuss intersections.
\begin{lemma}
Near to an intersection of two smooth branches  $x \in X$ there is a neighborhood such that for any $u$  $\sq_u$ has 2 minima near $x \in X$. As a consequence $N(u) \ge 4$ in that neighborhood.
\end{lemma}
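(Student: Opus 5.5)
The plan is to take ``near'' to mean that $u$ itself lies in a small neighborhood of the intersection point $x$. I would then find one local minimum of $\sq_u$ on each branch separately by a compactness argument. Finally I would pass to the normalization $S^1$ and use the alternation of minima and maxima to get $N(u)\ge 4$.

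First, the local setup. Let $\Gamma_1,\Gamma_2$ be the two smooth local branches through $x$. In the normalization $\gamma:S^1\to\mathbb{R}^2$ they correspond to two disjoint arcs $I_1,I_2\subset S^1$ with $\gamma(I_j)=\Gamma_j$. Choose $r>0$ small enough that the closed disk $B=\bar B(x,r)$ meets $X$ only in $\Gamma_1\cup\Gamma_2$, and that each $\Gamma_j\cap B$ is a single embedded arc through $x$ whose two endpoints lie on $\partial B$. This uses that the branches are smooth embedded arcs and $x$ is an isolated singular point. Take the neighborhood $U=B(x,r/3)$.

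Next, the key estimate. Fix $u\in U$ and consider $\sq_u$ restricted to the compact arc $\Gamma_j\cap B$, or rather to the corresponding compact subarc of $I_j$. At the point $x$ we have $\sq_u(x)=|u-x|^2<r^2/9$. At the two endpoints on $\partial B$ the distance to $u$ is at least $r-r/3=2r/3$, so $\sq_u\ge 4r^2/9$ there. Hence the global minimum of $\sq_u$ on this compact arc is attained at an interior point, which is therefore a local minimum of $\sq_u$ on $S^1$. This happens for $j=1$ and for $j=2$. The two minima are distinct points of the normalization, since they lie in the disjoint arcs $I_1,I_2$. This remains true when both have image $x$, e.g. for $u=x$, where the preimages of $x$ in the two branches are different points of $S^1$. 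So $m(u)\ge 2$.

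Finally, the count. For $u\in U\setminus\Delta_{tED}$ the critical points are isolated and each is a topological minimum or maximum. On $S^1$ the minima and maxima must alternate, so $M(u)=m(u)\ge 2$, matching Proposition \ref{p:count-normals}. Therefore $N(u)=M(u)+m(u)\ge 4$.

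I expect the main obstacle to be making ``near'' precise. For $u$ far away $\sq_u$ is monotone on each branch near $x$, so the claim can only hold for $u$ near $x$. One must also check that the interior minimum on each arc is a genuine isolated topological minimum and not a degenerate plateau. For generic $u\notin\Delta_{tED}$ this follows from Proposition \ref{p:oddcrpt} and the absence of focal points near $x$. In the $C^2$ case focal points lie at distance roughly the radius of curvature, bounded away from $x$. If a branch is only strict $C^1$ at $x$, I would instead use Theorem \ref{t:NinD} to see that the minimum persists off the discriminant. For the degenerate $u\in\Delta_{tED}$ the argument still yields two local minima in the weak sense of global minima on the arcs.
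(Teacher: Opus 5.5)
Your proof is correct, but it takes a different route from the paper. The paper's proof is a one-line local argument. On each smooth branch, $\sq_x$ has a nondegenerate (Morse) minimum at the preimage of $x$, since $|\gamma(s)-x|^2=|\gamma'(0)|^2s^2+o(s^2)$ in a regular parameter. Implicitly, such a minimum persists for $u$ near $x$. The bound $N(u)\ge 4$ then comes from $M(u)=m(u)$ (Proposition \ref{p:count-normals}), exactly as in your last step.

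You replace the perturbation step by the extreme value theorem on the compact arcs $\Gamma_j\cap\bar B(x,r)$, using only $\sq_u(x)<r^2/9<4r^2/9\le\sq_u$ at the endpoints. This gives an explicit neighborhood and needs no nondegeneracy. It therefore covers strict $C^1$ branches, where there is no $C^2$ Morse theory to invoke stability of the minimum, and it even works verbatim for cuspidal branches. The paper's route gives more when the branches are $C^2$: the two minima are Morse, so their type is locally constant and they move continuously with $u$.

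Your final paragraph should be simplified and partly corrected. No focal-point analysis is needed to exclude plateaus. The function $\sq_u\circ\gamma$ is real analytic on the normalization and nonconstant ($X$ has a double point, so it is not a circle). Hence its critical points are isolated, and an interior global minimum on an arc is automatically a topological minimum.

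Moreover, your claim that there are no focal points near $x$ is false at strict $C^1$ points with $\rho<1$. By Theorem \ref{t:NinD} the focal set accumulates at $x$ there. For example, on the branch $(t^3,t^5)$ the number of minima near $x$ switches between one and two as $u$ crosses $N_x$ or the caustic. This is exactly why your compactness argument, which asserts only at least one minimum per branch, is the right tool in that case.
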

\begin{proof}
At each of the branches $\sq_u$ has a Morse minimum at the intersection point.
\end{proof}

\subsection{Minimal number of critical points}
\begin{prop}
If $\sq_u$ has precisely 2 critical points for almost all $u \in \mathbf{R}^2$  then $X$ is a round circle.
\end{prop}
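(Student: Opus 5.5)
We first show that $X$ has no cuspidal points and no multiple points, and then that the evolute degenerates to a single point.

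\emph{Step 1: no cusps or self-intersections.} Suppose $X$ has a cuspidal point $x_0$. By Proposition \ref{l:outofnormal}, for every $u \notin N_{x_0}$ the function $\sq_u$ has a critical point at $x_0$ (a maximum if $u_1>0$, a minimum if $u_1<0$). The global minimum and global maximum of $\sq_u$ on the compact curve $X$ are also critical points. Take $u$ with $u_1>0$ close to $x_0$ but off $N_{x_0}$. Then the global minimum lies near $x_0$, but $x_0$ itself is a local maximum. So the minimum occurs at a different point, and this gives two distinct critical points near $x_0$. The global maximum, which is far away on $X$, gives a third. This contradicts the hypothesis on an open set of $u$. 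For a multiple point we use the Lemma on intersections: near such a point $N(u)\ge 4$ on an open set. Hence $X$ is a $C^1$-embedded closed curve. At strict $C^1$-points of type $q$ odd, Theorem \ref{t:NinD} gives births of $M+m$ pairs on one side of the normal, hence $N(u)\ge 4$ on an open set, so these points are excluded too.

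\emph{Step 2: the evolute is a point.} With $N(u)=m(u)+M(u)$ and $m(u)=M(u)\ge1$ (Proposition \ref{p:count-normals}), the hypothesis says that off a null set the normal count is exactly $2$. Crossing the caustic transversally at a fold point changes $N(u)$ by $\pm2$. Hence the caustic can have no one-dimensional part: a curve of fold points would have $N\ge 4$ on one side. So the envelope of the normal lines is at most a finite set. Cusps of the caustic (vertices) are handled the same way: near them there is a swallowtail-free region of three normals, again contradicting the hypothesis.

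It follows that the normals cannot form an envelope, so on each smooth arc the centre of curvature is constant. Since $X$ has no singular points of the types excluded in Step 1, the remaining admissible points ($p$ odd, $q$ even, or $q\ge 2p$) are $C^2$ with a well defined curvature, and the argument extends across them by continuity. All normals therefore pass through one fixed point $u_0$. Then $\frac{d}{dt}\sq_{u_0}(\gamma(t)) = 2\langle \gamma-u_0,\gamma'\rangle=0$ along $X$, so $|\gamma-u_0|$ is constant and $X$ is a circle centred at $u_0$.

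The main obstacle is Step 2: turning ``$N(u)=2$ almost everywhere'' into ``the caustic is a single point'' rigorously. I would try the following route. Take a point $x\in X$ of nonzero curvature that is not a vertex. Its centre of curvature $u_c$ is a fold point of the caustic, and for $u$ on the normal just beyond $u_c$ the local critical point at $x$ splits into three critical points. This gives $N(u)\ge 4$ on an open set unless the caustic is locally a point. Points of zero curvature are handled similarly, with the focal point at infinity and the normals crossing. Algebraicity makes the set of non-generic points finite, so a single generic point suffices.
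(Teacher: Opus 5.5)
Your argument takes a genuinely different route from the paper, and its core mechanism is sound. Several of the local claims it relies on are wrong, though, and need correcting.

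\textbf{Comparison with the paper.} The paper argues globally. If $X$ is not a circle, let $u^*$ be the centre of the smallest circle containing $X$. This circle meets $X$ in finitely many points (by algebraicity) but in at least two, and each is a strict global maximum of $\sq_{u^*}$. Strict maxima persist as local maxima of $\sq_u$ for $u$ near $u^*$. Since maxima and minima alternate on the normalization $S^1$, this gives $N(u)\ge 4$ on a neighbourhood of $u^*$. The argument uses no curvature, no caustic and no case analysis of singular points; cusps and self-intersections cost nothing. Your route is local and bifurcation-theoretic. It shows that extra normals appear next to every regular arc of the evolute, which is more informative, but it needs the unfolding picture at focal points. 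Once that is in place, your Step~1 becomes redundant.

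\textbf{Local claims that need correcting.}
(1) Points with $p$ odd, $q$ even, $q<2p$ (e.g.\ $x_2=x_1^{4/3}$) are strict $C^1$-points, not $C^2$. The curvature blows up there, so ``the argument extends across them by continuity'' is not justified as written.
(2) At a point $x$ with $\kappa\neq0\neq\kappa'$, the function $\sq_{u_c}$ has a cubic ($A_2$) critical point at $x$. Nearby $u$ therefore give $0$ or $2$ critical points near $x$, depending on the side of the evolute. ``Splits into three'' is the vertex ($A_3$) picture. On the normal at $x$, which is tangent to the evolute at $u_c$, you get $x$ plus one further nearby critical point on both sides of $u_c$.
(3) The jump ``by $\pm2$'' assumes $u_c$ is focal for only one point of $X$. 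For an (algebraic) curve of constant width, every evolute point is the centre of curvature of two opposite points, and the jump is $4$.

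\textbf{A clean fix.} Your last paragraph already points to it. If $X$ is not a circle, then by algebraicity and the single-component assumption no regular arc of $X$ is a circular arc or a segment. Hence there is a regular point $x$ with $\kappa\neq0\neq\kappa'$. For $u$ on the convex side of the evolute near $u_c$, $\sq_u$ has two critical points near $x$, with values close to $|x-u_c|^2$. Since $X$ is not contained in the circle of radius $|x-u_c|$ about $u_c$, the global maximum or the global minimum of $\sq_u$ is attained away from $x$. This gives $N(u)\ge 3$, hence $N(u)\ge 4$ by $m(u)=M(u)$, on an open set.
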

\begin{proof}
Assume $X$ is not a single round circle.  The smallest circle containing $X$ has finitely many (at least) two points in common with $X$. These correspond to at least two maxima;  so we have at least 4 critical points.
\end{proof}

\subsection{ Maximal and minimal distances}
In several applications one is interested in the maximal or minimal distance from a point $y$ to the object.
It is clear (see the examples) that absolute and relative  maximum and minimum can occur also  at s-normals . These are not considered  via the ED-degree approach in \cite{DHOST}  which deals only with the smooth part of $X$. The ED-degree counts only the traditional normals (including the complex ones)\\

\subsection{Examples}
\begin{Ex}[Global 2-7- cusp]
The pictures of the caustics in Figure \ref{fig:TenCases} wil contain often non-local information. We explain this in the 2-7 cusp-example:
$ \;  \; x_1(t) =  t^2 \; \; ; \; \; x_2(t) = t^7 $.

The caustic avoids the singularity $x$ at the origin.
The normal line $\{u_1=0\}$ in $x$ belongs to the $\Delta_{ tED}$, but is no part of the caustic. 
The  caustic cuts the normal; but be aware: this is not a focal point! In the 1-1-correspondence between evolute and curve the origin with $t=0$ corresponds with infinity. Only the asymptotic parts at infinity are the part of the caustic (very) near to $x$. Starting from $t=0$ in positive $t$-direction, the caustic   follows the normal line at the right side  to a cusp point.  This point corresponds  to a vertex point away from $x$ on $X$; the intersection point corresponds to some other point on $X$. 

\begin{center}
\begin{figure}[h]
\includegraphics[width=5cm]{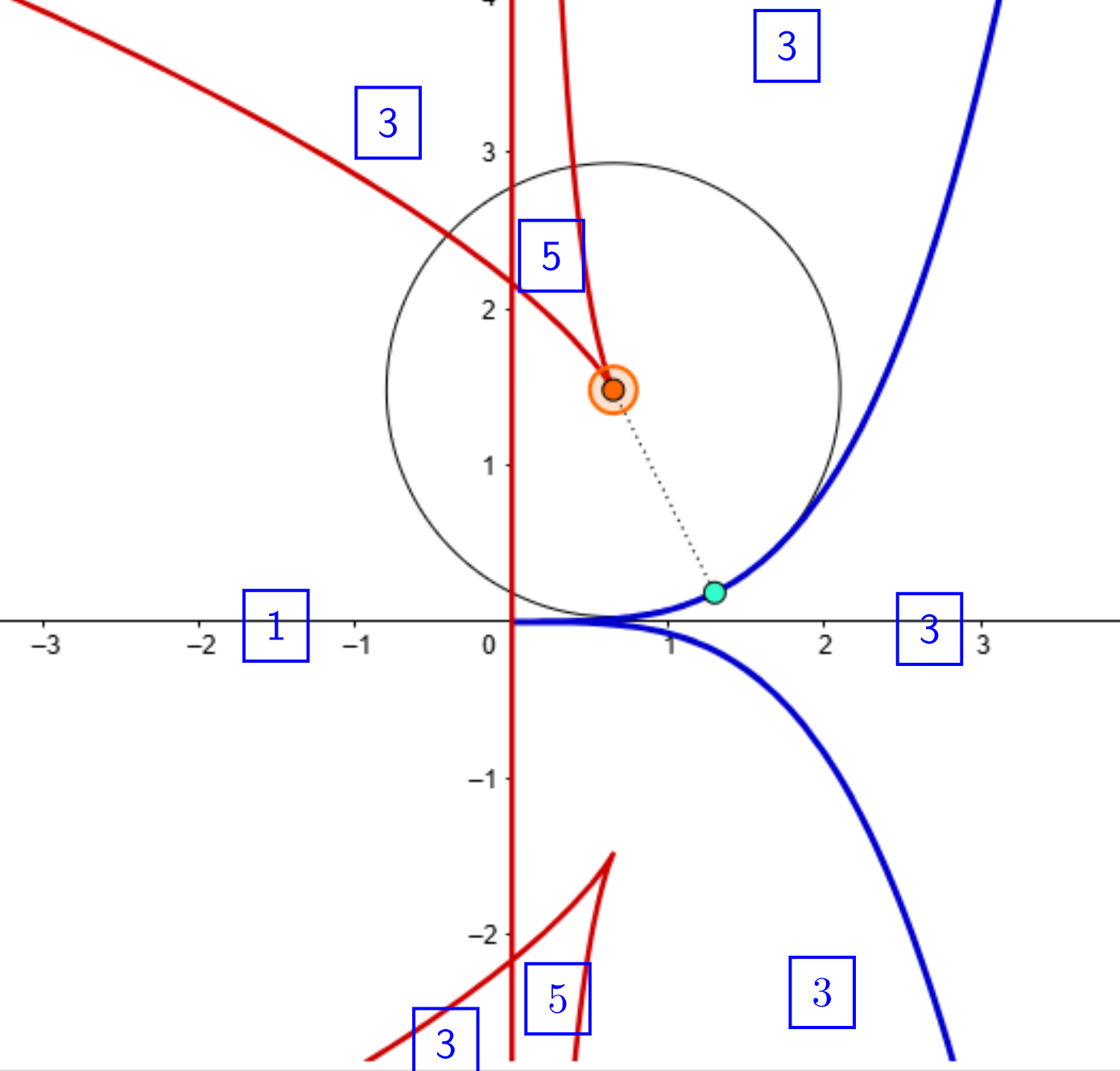}
\caption{The 2-7 cusp; the number of normals is indicated by a boxed number.}
\end{figure}
\end{center}

\end{Ex}
\begin{Ex}[Drop]
Let $X$ be given by:
$$ x_1(t) = \sin(t)   (\cos (t) - 1) \; \; ; \; \; x_2(t) = \cos (t)  $$
In Figure \ref{fig:drop}  The tED-discriminant is drawn together with the number of critical points in its complement. There is one distinguished normal: at the cusp of $X$.

\begin{center}
\begin{figure}[h]
\includegraphics[width=5cm]{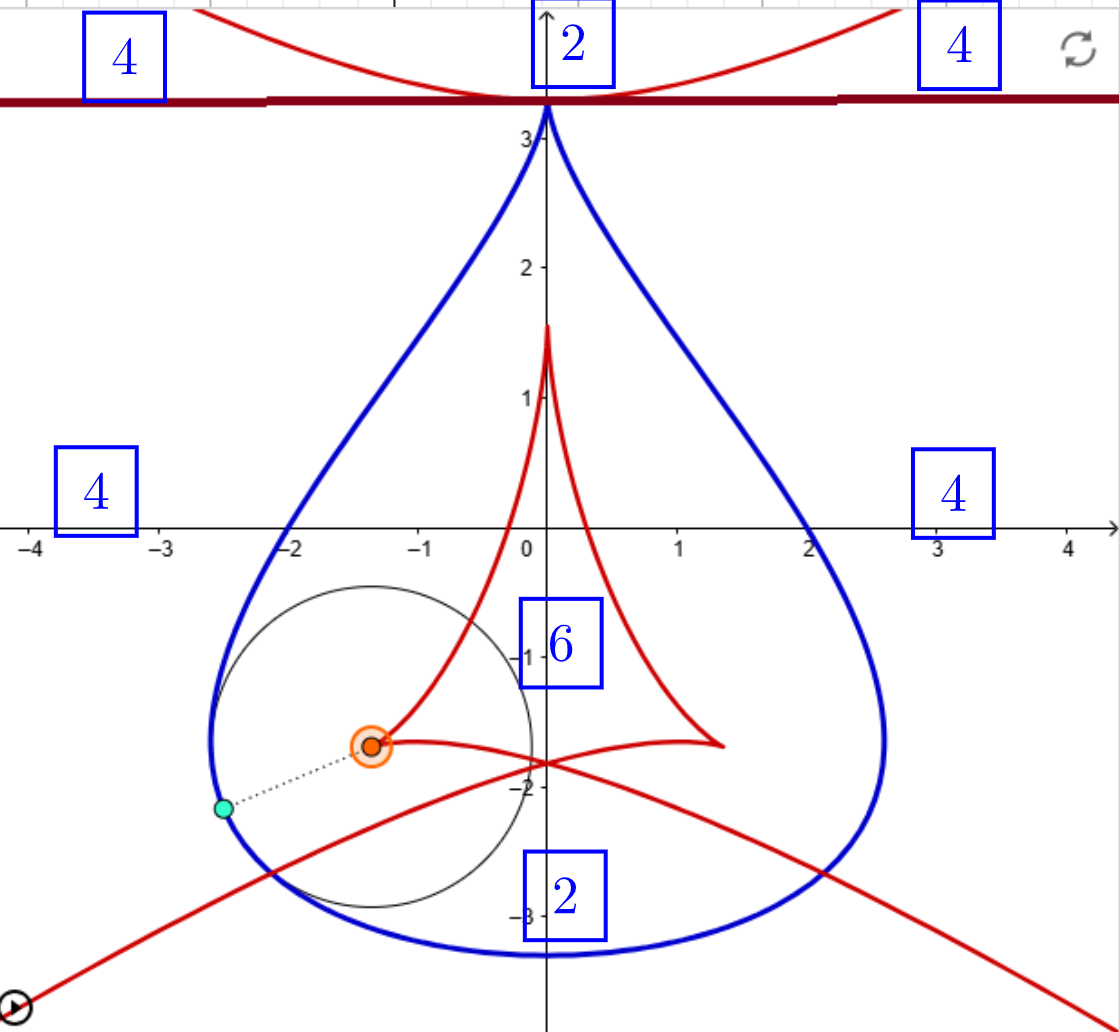}\label{fig:drop}
\caption{Drop and its discriminant; the number of critical points of $\sq_u$ is indicated by a boxed number. }

\end{figure}
\end{center}
\end{Ex}

\section{Double normals and singularities}\label{s:double}

\subsection{Double normals}

Double normals have been studied in the context of convex bodies, e.g.  in relation to several types of affine diameters, cf \cite{So} for a survey. Kuiper \cite{Kuiper}, answering a question of Klee, showed  that any convex body in $\mathbb{R}^n$ has at least $n$ double normals.

 Takens-White \cite{TW} studied double normals for smooth immersed submanifolds. They obtained bounds for the number of double normals in terms of Betti-numbers.

In this section we restrict to $\mathbb{R}^2$, where we want to mention also the work of Halpern \cite{Halpern1}. We will first study double normals between two different curves and after that on a single curve.

\subsection{Double normals of two different curves}

We start with two curves $X_a$ and $X_b$, both  algebraic with singularities,  given via parametrizations $a(t)$ and $b(u)$.  Consider the squared distance between points on $X_a$ and $X_b$:
$$\mathbb{B}  : X_a \times X_b \rightarrow  \mathbb{R}, \ \ \ \  \mathbb{B}(t,u):=|a(t)-b(u)|^2.$$

We assume that the intersection $X_a \cap X_b$ contains only finitely many points or is void. By genericity the critical points $(a,b)$ of $\mathbb{B}$ satisfy $a \notin \Delta_b$ and $b \notin \Delta_a$.

\begin{definition}
The function $\mathbb{B}$ is called topologically regular (top-regular) at $(a,b)$ if $\mathbb{B}$ is topologically equivalent to a linear function; topologically critical (top-critical) if it is not top-regular.
\end{definition}

\begin{prop}\label{p:double-crit}
The top-critical points of $ \mathbb{B}$ are pairs $(a,b)$ of one of the following types:
\begin{itemize}
\item rr-normals (traditional double normals),  i.e. intervals $ab$ such that $ab$ is a normal in $a$ and $ba$ is  normal in $b$. Both $a$ and $b$ regular points of resp. $X_a$ and $X_b$,
\item rs-normal:   one end point is a geometric singular point of one curve;
 $ab$ is normal at the other point, which is assumed to be  regular,
 \item ss-normals: intervals between two geometric singular points of $X_a$ and $X_b$,
 \item the intersection points $X_a \cap X_b$.
\end{itemize}
\end{prop}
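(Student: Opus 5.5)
We classify top-critical points of $\mathbb{B}$ by which of the two coordinates is a regular point and which is a geometric singular point. The plan is to reduce each case to the one-variable results for $\sq_u$. The key observation is that $\mathbb{B}(t,u)=\sq_{b(u)}(a(t))=\sq_{a(t)}(b(u))$. So each partial function of $\mathbb{B}$ is a squared distance function of the type studied in Propositions \ref{p:oddcrpt}, \ref{l:outofnormal} and \ref{p:focal}.

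\emph{Intersection points.} First dispose of pairs with $a=b\in X_a\cap X_b$. Here $\mathbb{B}=0$ is the absolute minimum, so these points are always top-critical. From now on assume $a\neq b$.

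\emph{Both endpoints regular.} Suppose $a$ and $b$ are regular (smooth, $p$ odd) points. In the normalization parameters, $\mathbb{B}$ is $C^1$ near $(t,u)$, with partial derivatives $\partial_t\mathbb{B}=2\langle a(t)-b(u),a'(t)\rangle$ and symmetrically for $\partial_u\mathbb{B}$.
\begin{itemize}
\item If the gradient is nonzero, $\mathbb{B}$ is top-regular by the $C^1$ result quoted from \cite{CE}.
\item Otherwise $b\in N_a$ and $a\in N_b$ (Proposition \ref{p:oddcrpt}), so $ab$ is an rr-normal.
\end{itemize}
At strict $C^1$-points $a'(t)$ may vanish in the old parameter. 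I therefore reparametrize by $s=t^p$ as in Proposition \ref{p:k-smooth}, which gives a genuine $C^1$ immersion.

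\emph{Exactly one endpoint singular.} Suppose $a$ is a geometric singular point and $b$ is regular, and use the product structure. If $b\notin N_a$, then $a$ is $\sq_b$-critical (a max or min) by Proposition \ref{l:outofnormal}. Write $t$ for the local parameter on the branch of $X_a$ at $a$.
\begin{itemize}
\item If $a\notin N_b$, then $\partial_u\mathbb{B}\ne0$. I then aim to show that $\mathbb{B}$ is top-regular by a local topological product argument: change coordinates $(t,u)\mapsto(t,\mathbb{B})$, which is a homeomorphism since $\mathbb{B}$ is strictly monotone in $u$ uniformly in $t$ near $t_0$.
\item If $a\in N_b$, then $\mathbb{B}$ restricted to the $u$-direction is Morse. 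Generically $b\notin\Delta_a$, and the problem becomes a sum of a one-variable extremum in $t$ (topologically $\pm t^2$ after reparametrization) and a Morse function in $u$. This makes $(a,b)$ top-critical: an rs-normal.
\end{itemize}
The case $b\in N_a$ is excluded by the genericity assumption $b\notin\Delta_a$, since $N_a\subset\Delta_{tED}$ by Theorem \ref{t:NinD}.

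\emph{Both endpoints singular.} Suppose $a$ and $b$ are both geometric singular points. By genericity $b\notin N_a$ and $a\notin N_b$, so each partial function has a strict extremum at the point (Proposition \ref{l:outofnormal}). The sum structure again gives top-criticality, which is an ss-normal.

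\emph{Main obstacle.} The hard step is making the "sum" heuristic rigorous. $\mathbb{B}$ is not literally a sum $f(t)+g(u)$, and at cuspidal points it is only continuous-with-Puiseux-expansion in the normalized parameter, so the standard splitting lemma is unavailable. To handle this I would write
\[
\mathbb{B}(t,u)=|a(t)-b(u_0)|^2+|b(u)-b(u_0)|^2-2\langle a(t)-a(t_0),b(u)-b(u_0)\rangle+\dots,
\]
and check from the expansions (\ref{e:sqd}) that the cross term has strictly higher weighted order than the two leading terms, using the weights $1/p$ and $1/q$ on $t$ and $u$. A weighted-homogeneous (Kuo-type) finite-determinacy argument would then show topological equivalence to the leading part. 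That leading part is $\pm t^{2}\pm u^{2}$-like up to homeomorphism, hence top-critical (extremum or topological saddle).
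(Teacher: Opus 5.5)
Your proof is correct. For what the proposition actually asserts, it is cleaner than the paper's. The statement is only a necessary condition: every top-critical pair is of one of the four types, not conversely (the paper itself finds top-regular rr- and rs-configurations).

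The paper puts both endpoints into the normal form $a(t)=(-1,0)+\varrho_\alpha(t^p,c_at^q)$, $b(u)=(1,0)+\varrho_\beta(u^r,c_bu^s)$ and expands $\mathbb{B}$ completely. It reads off the dominant part $-4\cos(\alpha)\,t^p+4\cos(\beta)\,u^r$ and runs a case analysis on the parities of $p,r$ and on the degeneracies $\cos\alpha=0$, $\cos\beta=0$. Along the way it asserts, without proof, that the dominant terms govern the topological type.

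You instead obtain the classification from the $C^1$ reparametrization $s=t^p$ at a regular endpoint. A nonvanishing partial derivative in that variable forces top-regularity via the homeomorphism $(t,s)\mapsto(t,\mathbb{B})$, whatever happens in the other variable. Pairs of geometric singular points are ss-normals by definition, and $a=b$ gives the intersection points. This direction needs no determinacy theorem and no genericity; for exactly this reason, your exclusion of $b\in N_a$ in the mixed case is superfluous. So your ``main obstacle'' only concerns the converse, which the statement does not claim.

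What the paper's expansion buys in exchange is the actual type (minimum, maximum, saddle or top-regular) in each subcase, which feeds the later Morse counts. Your weighted-determinacy sketch is the rigorous form of what the paper leaves implicit; the weights should be $1/p$ and $1/r$ for the two branches, not $1/q$.

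A few slips, none affecting the proposition:
\begin{itemize}
\item The reparametrization $s=t^p$ is needed at every algebraic singular point with $p\ge 3$ odd, not only at strict $C^1$-points. There too $a'(0)=0$ in the Puiseux parameter, so a vanishing gradient there says nothing about normality.
\item The exact splitting is $\mathbb{B}(t,u)=\sq_{b_0}(a(t))+\sq_{a_0}(b(u))-|a_0-b_0|^2-2\langle a(t)-a_0,b(u)-b_0\rangle$, with $a_0=a(t_0)$, $b_0=b(u_0)$. Your display drops $-2\langle a_0-b_0,b(u)-b_0\rangle$, which is precisely the leading $u$-term $4\cos(\beta)\,u^r$.
\item At an rs-normal the restriction to the regular branch need not be Morse, for instance at strict $C^1$-points or focal positions. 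The genericity you need there is $a\notin\Delta_b$, not $b\notin\Delta_a$.
\end{itemize}
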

\begin{proof}
We work on the normalizations of the curves and use parametrizations of type (\ref{e:Puis}) at singular points. 
We choose the coordinates such that
$$ a(t) = (-1,0) +\varrho_{\alpha}(t^p, c_a t^q)   \;  \mbox{\rm with} \;  p <q  \;  \mbox{\rm and}\;  c_a = c_a[t] \; ; c_a[0] \ne 0  ;$$
$$ b(u) =\;  \; (1,0) + \varrho_{\beta} (u^r, c_b u^s)  \;  \mbox{\rm with} \;  r <s  \;  \mbox{\rm and}\;  c_b = c_b[u]\; ; c_b[0] \ne 0. $$
where $\varrho_{\alpha}$ is a rotation over $\alpha$. We have:
$$\mathbb{B} (t,u) =
-2 c_a c_b t^q u^s \cos (\alpha -\beta )-2 c_a t^q u^r \sin (\alpha -\beta )+c_a^2 t^{2 q}$$ $$-4 c_a \sin (\alpha ) t^q +2 c_b t^p u^s \sin (\alpha -\beta )+c_b^2 u^{2 s}+4 c_b \sin (\beta ) u^s$$ $$ - 2 t^p u^r \cos (\alpha -\beta )+t^{2 p} -4 \cos (\alpha ) t^p +u^{2 r}+4 \cos (\beta ) u^r+4.
$$
The dominant terms are:
\begin{equation}\label{dominant}
-4 \cos (\alpha ) t^p +4 \cos (\beta ) u^r.
 \end{equation}

There are two possible degenerations:
\begin{itemize}
\item $\cos (\alpha )= 0$, i.e.  $b$ lies on the normal to the curve $a(t)$ in $a$,
\item $\cos (\beta) = 0$, i.e. $a$ lies on the normal to the curve $b(u)$ in $b$.
\end{itemize}

Assume now: No degeneration:  the topological type of $\mathbb{B}$  at $(a,b)$ is determined by the dominant part.
We draw the following conclusions:
\begin{itemize}
\item If $p = odd$, or $r= odd$ then $\mathbb{B}$ is topologicaly regular (top-regular) at $(a,b)$,
\item If $p = even$ and $r = even$,
 then $\mathbb{B}$ behaves as  the Morse type $ -4 \cos (\alpha ) t^2 +4 \cos (\beta ) u^2$.
 It will have minimum, maximum or saddle, depending on the values of $\alpha$ and $\beta$.
\end{itemize}

{\bf Case rr:} Let us assume now that both curves are at least $C^1$-smooth. So we have $p=odd$ and $r=odd$.
If no degenerations: we have topological regularity. We are left with $\cos (\alpha) = 0 \;  \&  \;\cos (\beta) = 0$.
Which means that $ab$ is a double normal (in the usual sense). The dominant terms are now:
\begin{equation}\label{rr-condition}
 -4 c_a (-1)^k t^q +t^{2 p} +4 c_b (-1)^l u^s+u^{2 r}.
 \end{equation}
Note $  \alpha = \frac{\pi}{2} + k \pi$ and $\beta = \frac{\pi}{2} + l \pi$. 

We can distinguish between  the two independent cases:\\
$q < 2p \; ; \; q > 2p \; ; \; q = 2p$,\\
$s < 2r \; ; \; s > 2r \; ; \; s = 2r$.\\
This will result in: top-regular, maxima, saddle and minima for $\mathbb{B}$ . We leave this to the reader.
Note that in case $q=2p$ and $s=2r$ we get
\begin{equation}\label{rr-degenerate}
  -4 c_a (-1)^k t^{2p} +t^{2 p} +4 c_b (-1)^l u^{2r}+u^{2 r}.
\end{equation}
and the type depends on the positions of the two focal points. 
NB. This is the $C^2$-case and will be treated in terms of differential geometry and standard Morse theory below.

{\bf Case rs: } Lets us assume $a$ is a  smooth point and $b$ geometrically singular. We look again to the dominant terms (\ref{dominant}). Now $p = odd$ $r= even$: top-regular as long as $\cos(\alpha) \ne 0$.  Degenerate iff $ba$ is normal at $a$. In that case
the dominant and some nearest higher order terms are:
\begin{equation}\label{rs-condition}
  -4 c_a (-1)^k t^q +t^{2 p} +4 \cos (\beta ) u^r   +4 c_b \sin (\beta ) u^s+u^{2 r}
\end{equation}
Note $  \alpha = \frac{\pi}{2} + k \pi.$ 

As soon as $\cos (\beta) \ne 0$ The dominant terms are:
$$  -4 c_a (-1)^k t^q +t^{2 p} +4 \cos (\beta ) u^r.  $$
We can distinguish between
$q < 2p \; ; \; q > 2p \; ; \; q = 2p$:
\begin{itemize}
\item $q < 2p$: dominant: $-4 c_a (-1)^k  t^q +4  \cos(\beta ) u^r$ ; if $q$ is odd: top-regular, else minimum, maximum or saddle,
\item $q > 2p$: dominant: $ t^{2 p} +4 \cos (\beta ) u^r  $ \\
Since $r$ is even: minimum or saddle,
\item $q =2p$: dominant $  -4 c_a (-1)^k t^{2p} +t^{2 p} +4 \cos (\beta ) u^r  $.
 Since $r$ is  even: minimum, saddle or maximum depending on the  place of the focal points.
\end{itemize}

{\bf Case ss:}   
 Lets us assume that  $a$  and $b$ are both geometrically singular. We look again to the dominant terms (\ref{dominant}). Now $p = even$ $r=even$. \textit{This is the generic case: The critical point is equivalent to a Morse critical point} as long as $\cos(\alpha) \ne 0$  and  $\cos(\beta) \ne 0$. Degenerate critical point of $\mathbb{B}$ iff $ba$ is normal at $a$ or $ab$ is normal at $b$. Assume the first, we can use again (\ref{rr-degenerate}):
dominant and nearest higher order terms are:
\begin{equation}
  -4 c_a (-1)^k t^q +t^{2 p} +4 \cos (\beta ) u^r +4 c_b \sin (\beta ) u^s+u^{2 r}.
\end{equation}
Note $  \alpha = \frac{\pi}{2} + k \pi.$ 
There is no difference with the  rs-case (\ref{rs-condition}).

If degenerate in both points $a$ and $b$ we get similar to the rr-case (\ref{rr-degenerate}):
$$  -4 c_a (-1)^k t^q +t^{2 p} +4 c_b (-1)^l u^s+u^{2 r}.$$
Note $  \alpha = \frac{\pi}{2} + k \pi$ and $\beta = \frac{\pi}{2} + l \pi$. 

We can distinguish between  the two independent cases:\\
$q < 2p \; ; \; q > 2p \; ; \; q = 2p$,\\
$s < 2r \; ; \; s > 2r \; ; \; s = 2r$.\\
This will result in: top-regular, maxima, saddle and minima . We leave this to the reader.
Note that in case $q=2p$ and $s=2r$ we get
\begin{equation}
 -4 c_a (-1)^k t^{2p} +t^{2 p} +4 c_b (-1)^l u^{2r}+u^{2 r}
\end{equation}
and the type depends on the positions of the two focal points.

{\bf Case intersection}: If the intersection of $X_a$ and $X_b$ is non-void then we have clearly an absolute minimum at each intersection point.
\end{proof}

In the smooth $C^2$ case we can use elementary differential geometry and Morse theory to determine the index of the critical points of $\mathbb{B}$.
Assume that $X$ is parametrized by arc length. The Frenet-frame consists of the pair $(\mathbf{t},\mathbf{n})$ (tangent vector, normal). We recall the  Frenet-Serret formula's $\mathbf{t}'= \kappa \mathbf{n}$, $\mathbf{n}'= - \kappa \mathbf{t} $ . \\ 

For a critical point $x \in X$ of  $\sq_y$ we  define $I(x) = \kappa  (x-y)\cdot \mathbf{n} +1$, the \textit{index indicator}. It is the second derivative of  $\sq_y$ at $x \in X$. If $I(x) < 0$ then there is a maximum; when $I(x) > 0$ a minimum.\\

Next consider frames in two points $a \in X_a$ and $b \in X_b$. We will add $a$ and $b$ as lower-index to the vector.
 Let $\gamma$ be the angle between $\mathbf{t}_a$ and $\mathbf{t}_b$.

\begin{prop}
{\bf Index statement.}
Let $ab$ be a double normal (in two smooth $C^2$ points) which is Morse (as critical point of $\mathbb{B}$) with Hessian
$$ H(a,b) =  I(a) I(b) - 1. $$ The  double normal has the following type:
\begin{enumerate}
\item saddle type if  $H(a,b) < 0$,
\item maximum type if $H(a,b) > 0$ and  $I(a)< 0$ and $I(b) < 0 $,
\item minimum type if $H(a,b) > 0$ and  $I_(a) > 0$ and $I(b) > 0 $.
\end{enumerate}
\end{prop}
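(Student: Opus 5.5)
The plan is to compute the Hessian of $\mathbb{B}$ directly in arc-length parameters and then apply the standard classification of nondegenerate critical points of a function of two variables. Parametrize $X_a$ by arc length $s$ and $X_b$ by arc length $\sigma$, with Frenet frames $(\mathbf{t}_a,\mathbf{n}_a)$ and $(\mathbf{t}_b,\mathbf{n}_b)$. Put $d = a(s)-b(\sigma)$, so that $\mathbb{B}(s,\sigma) = d\cdot d$. The first derivatives are
$$\partial_s \mathbb{B} = 2\, d\cdot \mathbf{t}_a, \qquad \partial_\sigma \mathbb{B} = -2\, d\cdot \mathbf{t}_b .$$
Their vanishing is exactly the double normal condition of Proposition \ref{p:double-crit}, case rr: $d$ is orthogonal to both tangent lines.

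Next compute the second derivatives using $\mathbf{t}'=\kappa\mathbf{n}$. This gives
$$\partial_s^2\mathbb{B} = 2\bigl(\mathbf{t}_a\cdot\mathbf{t}_a + \kappa_a\, d\cdot\mathbf{n}_a\bigr) = 2\bigl(1+\kappa_a (a-b)\cdot\mathbf{n}_a\bigr) = 2I(a).$$
Here $I(a)$ is the index indicator of $\sq_b$ at $a$; this is the point where the definition of $I$ with $y=b$ must be matched. In the same way,
$$\partial_\sigma^2\mathbb{B} = 2\bigl(1 - \kappa_b\, d\cdot \mathbf{n}_b\bigr) = 2\bigl(1+\kappa_b (b-a)\cdot\mathbf{n}_b\bigr) = 2I(b),$$
where now $y=a$. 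The mixed derivative is
$$\partial_s\partial_\sigma \mathbb{B} = -2\,\mathbf{t}_a\cdot \mathbf{t}_b = -2\cos\gamma .$$
At a double normal in the plane, both unit tangents are orthogonal to the same nonzero vector $d$, so $\mathbf{t}_a=\pm\mathbf{t}_b$ and $\cos^2\gamma = 1$. Hence the determinant of the Hessian matrix is
$$4\bigl(I(a)I(b)-\cos^2\gamma\bigr) = 4\bigl(I(a)I(b)-1\bigr) = 4H(a,b).$$
I expect this sign bookkeeping (orientation of the normals and the choice of $y$ in the definition of $I$) to be the only delicate point. A convenient check is the case of two concentric circles.

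Finally, apply the Morse lemma to the symmetric $2\times 2$ matrix. If $H(a,b)<0$, the Hessian is indefinite, so the critical point is a saddle. If $H(a,b)>0$, then $I(a)I(b)>1$, so $I(a)$ and $I(b)$ are nonzero and of the same sign. The Hessian is then definite, positive when both are positive (a minimum) and negative when both are negative (a maximum). This gives the three cases of the statement. The case $H(a,b)=0$ is excluded by the Morse hypothesis.
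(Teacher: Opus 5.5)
Your proposal is correct and is essentially the paper's proof: you compute the Hessian of $\mathbb{B}$ in arc-length parameters via the Frenet formulas, use $\cos^2\gamma=1$ at a double normal to get a determinant proportional to $I(a)I(b)-1$, and apply the standard $2\times 2$ Morse classification. Your bookkeeping is slightly cleaner than the paper's: your $\partial_\sigma^2\mathbb{B}$ involves $(b-a)\cdot\mathbf{n}_b$, which is what matches $I(b)$ with $y=a$ (the paper writes $(a-b)$ there), and you correctly take the mixed term to be $-2\cos\gamma$, of which only the square matters.
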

\begin{proof}
 We will compute the first and second order partial derivatives of $\mathbb{B}$ in non-boundary points.
 $$ B_1 = (a-b) \cdot  a'\; \; ; \; \; B_2 =(b-a )\cdot b'   $$ 
$$ B_{11} = \kappa_a (a-b )\cdot \mathbf{n}_a +\mathbf{ t}_a^2 = \kappa_a (a-b) \cdot \mathbf{n}_a  +1 \; \; \mbox{\rm at} (a,b)  $$
$$ B_{22} = \kappa_b (a - b )\cdot \mathbf{n}_b +\mathbf{ t}_b^2 = \kappa_b(a-b) \cdot \mathbf{n}_b  +1 \; \; \mbox{\rm at} (a,b) $$
$$ B_{12}=B_{21} = - \mathbf{t}_a \cdot \mathbf{t_b}= - \cos^2 \gamma = -1.$$
Apply next the standard criteria for Morse types of a function of two variables.
\end{proof}

Is there a relation between the type of of double normal  and the types of the two associated `single' normals? \\
It follows immediately that if $ab$ is of minimum or maximum type then then normal $ab$   is of the same type as the (`single') normal from $a$ and also as normal from $b$.  If the double normal is of saddle type there is no general rule.
The types depend on the positions of the focal points of $X_a$ and $X_b$ with respect to $a$,$b$.\\
We summarize:

\begin{itemize}
\item   Double normal maximal     gives 2 maxima at the end points,
\item  Double normal minimal     gives two minima at the end points,
\item Double normal saddle       gives Max-min or Max-Max or min-min at the end points.
\end{itemize}

\subsection{Maximal and minimal length of double normals}.\\
We will only work with compact spaces $X_a$ and $X_b$, i.e both are images of $S^1$.
In case of non-void  intersections $X_a \cap X_b$ the the minimum of $ \mathbb{B}$ is zero.
In the other cases the absolute minimum and absolute maximum of $\mathbb{B}$ can be attained in smooth points $(a,b)$ or in one or both singular points.
These two extreme values are important in computational geometry.

\subsection{Morse theory for double normals}\

We will use Morse theory on the smooth normalization of $\tilde{X}_a$ and $\tilde{X}_b$. Due to our compactness assumptions  each topological component is homeomorphic to $S^1$. Let $M= \tilde{X_a} \times \tilde{X}_b$. Note that on intersection points $X_a \cap X_b$ the function $\mathbb{B}$ has an absolute mininimum zero. In the counts below we will include this in the number of double normals of minimum type. The Morse (in)equalities imply:

\begin{prop}\label{p:double2-count}

 On $M$
   we have Morse count for $\mathbb{B}$
   \begin{itemize}
   \item
  $\sharp(Max)-\sharp(Saddles) +\sharp(Min)  =0.$
 \item 
    The number of double normals is at least $ 4$.
    \end{itemize}
\end{prop}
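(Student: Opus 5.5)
The plan is to apply Morse theory to $\mathbb{B}$ on the torus $M=\tilde X_a\times\tilde X_b\cong S^1\times S^1$. The main difficulty is that $\mathbb{B}$ is not Morse everywhere, so we first argue in the generic situation and then pass to topological counts.

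\textbf{Step 1: the local types.} By Proposition \ref{p:double-crit}, the top-critical points of $\mathbb{B}$ are isolated and of four kinds: rr-, rs- and ss-normals, and intersection points. In the generic (nondegenerate) situation each is topologically a Morse point. An ss-normal with $\cos\alpha\ne0\ne\cos\beta$ is equivalent to $-4\cos(\alpha)t^2+4\cos(\beta)u^2$. An rs-normal has dominant part $\pm t^{2}$-like (or $t^q$, $q$ even) plus $4\cos(\beta)u^r$ with $r$ even. rr-normals are treated by the Index statement, or by (\ref{rr-condition}) when $q$ and $s$ are even. An intersection point is an absolute minimum with value $0$. In each case the point is a local minimum, maximum or saddle, with index $0,2$ or $1$ respectively.

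We must also rule out a nonstandard isolated critical point. This occurs when, for instance, $q$ is odd with $q<2p$, which makes the point top-regular; we do not count it. It also occurs for degenerate points that appear only on a discriminant. We avoid these by assuming genericity, as stated before Proposition \ref{p:double-crit} ($a\notin\Delta_b$, $b\notin\Delta_a$). If needed, we perturb $X_b$ by a small translation, which preserves the singularity types.

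\textbf{Step 2: the Euler characteristic.} Once every top-critical point is topologically Morse with a well-defined index, the topological Morse theory (as in Step 1, via \cite{CE}) gives
$$\sharp(\mathrm{Min})-\sharp(\mathrm{Saddles})+\sharp(\mathrm{Max})=\chi(S^1\times S^1)=0.$$
To justify this for a continuous function, we either smooth $\mathbb{B}$ near each point to an honest Morse germ of the same index, which is possible because topological minima, maxima and saddles of index $0,1,2$ in dimension two have local Euler index $+1,-1,+1$, or we compute the sum of local indices of a gradient-like flow. Intersection points are counted among the minima, as agreed.

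\textbf{Step 3: the lower bound.} We use the Morse inequalities, or Lusternik–Schnirelmann theory, on the torus. The Betti numbers are $1,2,1$, so $\sharp(\mathrm{Saddles})\ge 2$. Compactness of $M$ gives at least one absolute minimum and one absolute maximum. Hence the total number of critical points, that is, of double normals including intersection points, is at least $1+2+1=4$. Equivalently, the Euler count with $\sharp\mathrm{Min}\ge1$ and $\sharp\mathrm{Max}\ge1$ forces $\sharp\mathrm{Saddles}\ge2$ directly.

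The main obstacle is Step 2: showing that the singular (rs, ss) critical points have the local topological index of the corresponding Morse germ. This holds because their type is determined by the dominant quasi-homogeneous part, which has an isolated critical point. The local index can therefore be read off from the weighted-homogeneous model $\pm t^{2k}\pm u^{2l}$, which is $C^0$-equivalent to $\pm t^2\pm u^2$.
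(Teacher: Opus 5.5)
Your proposal is correct and is the paper's own argument: the Morse (in)equalities for $\mathbb{B}$ on the torus $M=\tilde X_a\times\tilde X_b$, with intersection points counted as minima. Your Step 1 adds something the paper leaves implicit, namely that generic top-critical points are topologically Morse, read off from the dominant terms in Proposition \ref{p:double-crit}. The one slip is the aside ``or Lusternik--Schnirelmann theory'': that only guarantees $\mathrm{cat}(T^2)=3$ critical points. The bound $4$ therefore rests on the Euler count $\sharp\mathrm{Saddles}=\sharp\mathrm{Min}+\sharp\mathrm{Max}\ge 2$, which uses the nondegeneracy you established in Step 1.
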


\begin{prop}\label{p:extra-double}
For  a generic pair of curves:
\begin{itemize}
\item Every   cuspidal point  contributes to at least two double normals,
\item Two cuspidal points at different curves contribute to a double normal between the cusps,
\item An intersection point between $X_a$ and $X_b$ contributes to a (extra) minimum. As a consequence there is also an extra saddle.
\end{itemize}
\end{prop}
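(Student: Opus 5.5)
We prove the three items of Proposition \ref{p:extra-double} separately. Throughout, ``generic'' means that $\mathbb{B}$ has only isolated top-critical points of the types listed in Proposition \ref{p:double-crit}, and that none of the degenerations $\cos\alpha=0$, $\cos\beta=0$ occurs at a pair consisting of a cusp and a point where it is not forced.

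\emph{First item.} Fix a cuspidal point $b_0\in X_b$ and restrict $\mathbb{B}$ to the circle $\tilde X_a\times\{b_0\}$; this restriction is $\sq_{b_0}$ on $X_a$. By Proposition \ref{p:count-normals} (applied with $u=b_0\notin\Delta_{tED}(X_a)$, which holds by genericity) it has at least one maximum and at least one minimum, say at $a_1$ and $a_2$. If $a_i$ is a regular point of $X_a$, then $b_0a_i$ is normal at $a_i$, so $\cos\alpha=0$ there. Since $b_0$ is cuspidal ($r$ even), the pair $(a_i,b_0)$ is an rs-normal, and the dominant part in (\ref{rs-condition}) shows that it is top-critical. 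If $a_i$ is itself cuspidal, then $(a_i,b_0)$ is an ss-pair, which is Morse and hence critical. Since $a_1\ne a_2$, we obtain two distinct critical points of $\mathbb{B}$ containing $b_0$.

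The step needing care is genericity. We must ensure that the extremum of $\sq_{b_0}$ at a regular point is non-degenerate, i.e.\ that $b_0$ is not a focal point of $X_a$. We must also ensure that $q\ne 2p$ does not produce a top-regular case. We handle both by choosing $b_0\notin\Delta_{tED}(X_a)$ and using the case analysis after (\ref{rs-condition}): with $r$ even, the $q>2p$ and $q=2p$ cases always give an extremum or a saddle. The case $q<2p$ with $q$ odd cannot be an extremum of $\sq_{b_0}$ by Proposition \ref{p:focal}, so it does not arise.

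\emph{Second item.} Let $a_0\in X_a$ and $b_0\in X_b$ be cuspidal, with $p$ and $r$ even. For a generic pair, $b_0\notin N_{a_0}$ and $a_0\notin N_{b_0}$, so $\cos\alpha\ne0$ and $\cos\beta\ne0$. The dominant part (\ref{dominant}) is then $-4\cos\alpha\, t^p+4\cos\beta\, u^r$ with both exponents even. This is topologically a Morse critical point (minimum, maximum or saddle), exactly as in the ss-case of Proposition \ref{p:double-crit}. Hence $(a_0,b_0)$ is an ss-normal.

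\emph{Third item.} At an intersection point $\mathbb{B}=0$, so it is an absolute minimum; by assumption intersections are finite, hence these minima are isolated. Each such minimum is an extra critical point beyond those produced elsewhere. Since $M$ is a torus and all critical points are topologically of Morse type (max, min or saddle), the Morse count of Proposition \ref{p:double2-count}, $\sharp(Max)-\sharp(Saddles)+\sharp(Min)=\chi(M)=0$, forces the number of saddles to grow together with the minima. So each additional minimum is balanced by an additional saddle.

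The main obstacle is justifying the use of Euler-characteristic counting with topological (non-smooth) critical points. Here we would rely on the fact that generically each top-critical point is topologically equivalent to a Morse point, as shown above. Then the standard count applies on the normalization, where the two curves become smooth circles.
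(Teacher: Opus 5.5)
Your proposal is correct and takes the same route as the paper. For the first item you take the maximum and minimum of $\sq_{b_0}$ on the slice $\tilde X_a\times\{b_0\}$ and feed them into the rs/ss case analysis of Proposition \ref{p:double-crit}. The second item is exactly the generic ss-case. For the third you use the absolute minimum at intersection points together with the Euler-characteristic count of Proposition \ref{p:double2-count}. You are simply spelling out the details of the paper's one-line proof (``minimum-maximum arguments, Proposition \ref{p:double-crit} and Morse inequalities'').
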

\begin{proof}
This follows from minimum-maximum arguments, Proposition \ref{p:double-crit} and Morse inequalities.

\end{proof}

\subsubsection{Drop-Ellipse normals}
We consider two curves (Figure \ref{fig:dropellipse}):
\begin{itemize}
\item The drop:   $a(t)=(\,\sin t(\cos t -1), \,  \cos t)$
\item The ellipse:   $b(t) = (\,A \sin t , \,B \cos t)$ with $A=3, B=2$.
\end{itemize}
The ellipse is smooth, the drop has a singular point $(0,1)$ at $t=0$.
\begin{center}
\begin{figure}
\includegraphics[width=6cm]{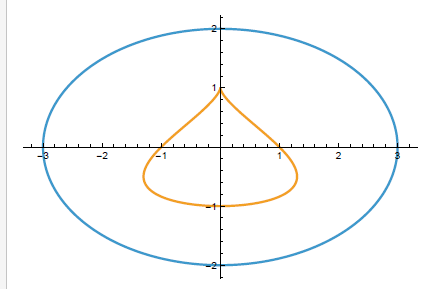}
\caption{Drop inside ellipse }\label{fig:dropellipse}
\end{figure}
\end{center}
The squared Euclidean distance function is:
$$\mathbb{B}(s,t) = (\sin t (\cos t -1)- A \sin s)^2 + (\cos t - B \cos s)^2$$
This function is defined on a torus and due to the parametrization it is a differential function. The parametrization of $a$ is defined on a smooth normalization.
Computations with Mathematica give us 14 critical points. 
We can use the notation $3m + 7s +4 M$; See the Figure \ref{fig:functionB}.

\begin{figure}[ht]
\begin{minipage}[b]{0.45\linewidth}
\centering
\includegraphics[width=\textwidth]{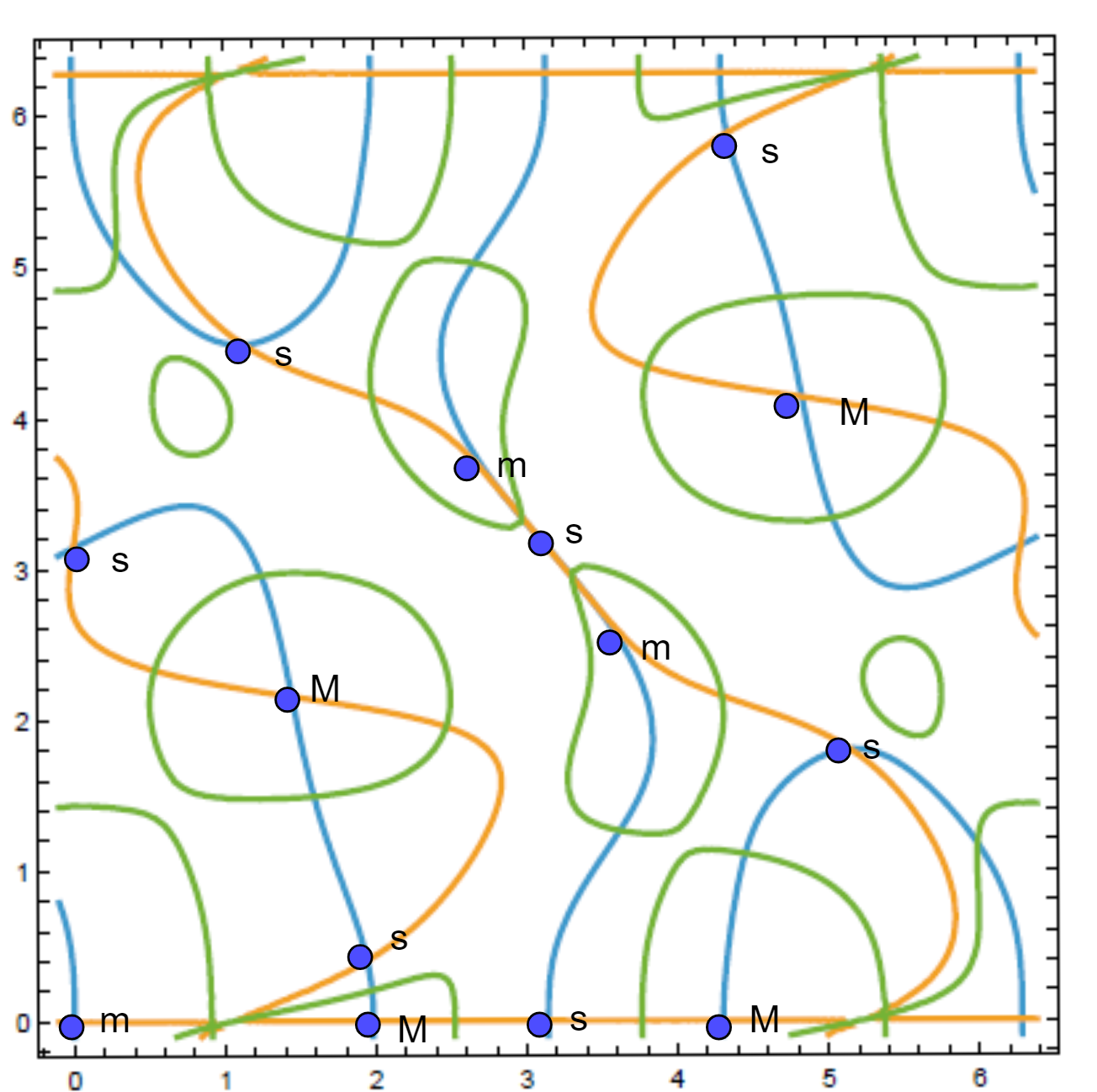}
\label{fig:image1}
\end{minipage}
\hspace{0.05\linewidth}
\begin{minipage}[b]{0.45\linewidth}
\centering
\includegraphics[width=1.14\textwidth]{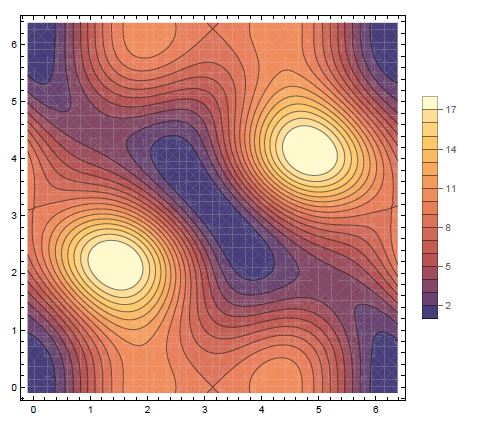}
\label{fig:image2}
\end{minipage}
\caption{The function $\mathbb{B}$:\\  Left: {Partial derivatives equal zero in blue and yellow; Hessian equal zero in green} ; Right: Level curves}\label{fig:functionB}
\end{figure}

\subsection{Double normals on a single curve}

Let $X = X_a = X_b$.  We still assume $X$  bounded and a single topological component.
Note that $\mathbb{B}$ has  a non-isolated singularity on the diagonal  $\Delta X$ with the absolute minimum value $0$. 
The function $\mathbb{B}$ is is symmetric with respect to the permutation of $x_1$ and $x_2$.  Therefore we consider the quotient space. Consider the set of unordered pairs of points of the normalization of the curve $X$. $$\mathcal{N}:=\tilde{X}\times \tilde{X}/(x_1,x_2)\sim (x_2,x_1).$$
The function $\mathbb{B}$  induces the function:
$$\mathbb{B}: \mathcal{N} \rightarrow  \mathbb{R}, \ \ \ \ \mathbb{B}(x_1,x_2):= |x_1-x_2|^2.$$

We count double normals as critical points of $\mathbb{B}$. Self intersections correspond to  pairs  with different points on the normalization. They are (absolute) minima of $\mathbb{B}$ and are included in the counting of minima below.

\begin{prop}\label{p:double-count}
 For a generic curve, we have Morse count: 
  \begin{itemize}
  \item The number of double normals is at least $2$.,
 \item  $\sharp(Max)+\sharp(Min) = \sharp(Saddles).$
 \end{itemize}
\end{prop}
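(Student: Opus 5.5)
We reduce the statement to Morse theory on a surface obtained from $\mathcal{N}$ by removing a small neighbourhood of the diagonal. Since $\tilde X\cong S^1$, the space of unordered pairs $\mathcal{N}=S^1\times S^1/\sim$ is a closed Möbius band whose boundary circle is the diagonal $\Delta\tilde X$. On $\Delta\tilde X$ the function $\mathbb{B}$ has its non-isolated absolute minimum $0$, so we cut it away. Choose $\varepsilon>0$ smaller than every positive critical value of $\mathbb{B}$. Let $\mathcal{N}_\varepsilon=\{\mathbb{B}\ge\varepsilon\}$, taken minus the small discs around self-intersection pairs. The remaining set is a compact surface $W$ homotopy equivalent to the open Möbius band, hence $\chi(W)=0$ before the discs are removed.

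The first step is to check that $\mathbb{B}$ is regular and points outward on the boundary of $\mathcal{N}_\varepsilon$ near the diagonal. Close to the diagonal, for a pair $(x_1,x_2)$ of nearby points on a single branch, $\mathbb{B}$ behaves like the square of the parameter difference. At a cusp of type $(p,q)$ it behaves like $(t_1^p-t_2^p)^2+\dots$. This requires a local computation at cuspidal points: one must show that $\{\mathbb{B}=\varepsilon\}$ is a circle parallel to the diagonal and that $\mathbb{B}$ has no critical points in the collar $0<\mathbb{B}<\varepsilon$. For an even-$p$ cusp the pairs $(t,-t)$ collapse to the same point only when $t=0$, so the collar is still a topological annulus. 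I expect this to be the main obstacle. The pairs $(t,-t)$ near a cusp make $\mathbb{B}$ small off the diagonal, so one has to verify that no spurious critical pairs approach the cusp. My first attempt would use the normal form $(t^p,ct^q)$ to show that $\partial_{t_1}\mathbb{B}$ and $\partial_{t_2}\mathbb{B}$ do not vanish simultaneously for small $t_1\neq t_2$.

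Next we handle the self-intersections. Each one gives an isolated absolute minimum (value $0$) at an off-diagonal pair, by the intersection case of Proposition \ref{p:double-crit}. We keep these as interior minima instead of removing discs; by genericity they are Morse minima, being transversal crossings of smooth branches. The remaining critical points, namely the rr-, rs- and ss-normals, are by genericity and Proposition \ref{p:double-crit} topologically Morse: maxima, minima or saddles. The $C^1$ but non-$C^2$ cases are excluded by genericity.

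We then apply the topological Morse equality on the surface $W=\{\mathbb{B}\ge\varepsilon\}$, on which $-\mathbb{B}$ has its boundary as a regular minimum level. This gives $\sharp(\mathrm{Min})-\sharp(\mathrm{Saddles})+\sharp(\mathrm{Max})=\chi(W)=\chi(\text{M\"obius band})=0$, which is the second claim. For the first claim, $\mathbb{B}$ attains a positive absolute maximum on $W$, which is the diameter pair, so $\sharp(\mathrm{Max})\ge1$. The Morse equality then forces $\sharp(\mathrm{Saddles})\ge1$. Hence there are at least $2$ critical points, that is, at least two double normals, counting self-intersection minima as in the convention. One could alternatively use the Morse inequality $\sharp(\mathrm{Saddles})\ge b_1(W)=1$ directly, since $W$ has the homotopy type of a circle.
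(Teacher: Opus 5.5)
Your proposal is correct and follows essentially the paper's route: the paper collapses the diagonal boundary of the Moebius band $\mathcal{N}$ to a point and works on $\mathbb{RP}^2$, where $\chi=1$ and the collapsed boundary is one extra minimum; this is equivalent to your removal of a collar and the count $\sharp(\mathrm{Min})-\sharp(\mathrm{Saddles})+\sharp(\mathrm{Max})=\chi(\text{Moebius band})=0$. The collar step you flag as the main obstacle, which the paper passes over silently, can be settled without normal-form computations at cusps: $\mathbb{B}$ is constant along any arc of critical points, so by the curve selection lemma no critical points with positive value can accumulate on $\mathbb{B}^{-1}(0)$, which is the diagonal together with the isolated self-intersection pairs.
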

\begin{proof}
  $\mathbb{B}$  is defined on the Moebius band, and attains its global minimum $0$ on the boundary. Contraction of the boundary yields the projective plane. So now $\mathbb{B}$ is defined on the projective plane. All the critical points correspond bijectively to double normals except for the  boundary.
\end{proof}

\begin{remark}
It follows from the Morse inequalities that there must be at least one saddle point and one maximum. It looks perhaps surprising, that there is no guarantee  for a double normal of minimum type. 
\end{remark}

Proposition \ref{p:extra-double} implies for a single curve:
\begin{prop}\label{p:extra-selfdouble}
For  a generic curve:
\begin{itemize}
\item Every   cuspidal point  contributes to at least one double normal,
\item Two cuspidal points contribute to a double normal between the cusps,
\item An intersection point between $X_a$ and $X_b$ contributes to a (extra) minimum. As a consequence there is also an extra saddle.
\end{itemize}
\end{prop}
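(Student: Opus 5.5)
We specialise the two-curve analysis to $X_a=X_b=X$ and pass to the quotient $\mathcal{N}$, which after collapsing the diagonal is the projective plane $\mathbb{RP}^2$ (proof of Proposition \ref{p:double-count}). Off the diagonal, the local classification of top-critical points in Proposition \ref{p:double-crit} applies verbatim to pairs $(x_1,x_2)$ with $x_1\neq x_2$ in the normalization. Symmetry under $(x_1,x_2)\mapsto(x_2,x_1)$ only identifies critical points in pairs and does not change their types. So each item reduces to an existence statement for critical points of $\mathbb{B}$ on $\mathcal{N}$.

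\emph{Pairs of cuspidal points.} Let $a\neq b$ be two cuspidal points of $X$. By genericity, $b\notin N_a$ and $a\notin N_b$, i.e. $\cos\alpha\neq0\neq\cos\beta$ in the notation of Proposition \ref{p:double-crit}. Then $p,r$ are even, and the dominant part (\ref{dominant}) is $-4\cos\alpha\, t^p+4\cos\beta\, u^r$. Hence $(a,b)$ is a top-critical point equivalent to a Morse minimum, maximum or saddle (case ss). The segment $ab$ is therefore an ss-normal, which gives the second item.

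\emph{A single cuspidal point.} Fix a cusp $a$ and restrict $\mathbb{B}$ to the circle $\{a\}\times\tilde X$, i.e. consider $\sq_a$ on $X$ with $u=a$. I would look at the farthest point $b$ of $X$ from $a$, which is an absolute maximum of $\sq_a$ and hence a critical point of $\sq_a$. If $b$ is regular, then $ab$ is normal at $b$. Next, vary $a$ along its own branch: the dominant term $-4\cos\alpha\,t^p$ with $p$ even shows that $t=0$ is a local extremum in the $t$-direction whenever $\cos\alpha\neq0$. Combining the two directions, $(a,b)$ is an rs-critical point as in (\ref{rs-condition}), possibly a saddle. If instead $b$ is cuspidal, we fall under the previous item. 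In either case $a$ is an endpoint of a double normal.

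\emph{Self-intersections.} A self-intersection of $X$ gives an isolated point of $\mathcal{N}$, away from the collapsed boundary, where $\mathbb{B}=0$. This is an extra minimum. The Morse relation $\sharp(Max)+\sharp(Min)=\sharp(Saddles)$ of Proposition \ref{p:double-count} (Euler characteristic of $\mathbb{RP}^2$) then forces one more saddle, since the maxima cannot decrease. This is the main delicate step: the relation must be used as an equality valid for every generic curve, and in the topological setting (ss- and rs-points only topologically Morse) it rests on the fact that topologically Morse points still contribute $\pm1$ to the Euler characteristic. The rest is routine bookkeeping on top of Proposition \ref{p:double-crit}.
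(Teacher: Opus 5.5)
Your proposal is correct and follows the paper's argument: the paper just specialises Proposition \ref{p:extra-double}, whose proof combines a min--max argument (your farthest-point construction), the local classification of Proposition \ref{p:double-crit}, and the Morse relation on $\mathcal{N}\cong\mathbb{RP}^2$ from Proposition \ref{p:double-count}. Working directly on $\mathcal{N}$, as you do, has the advantage of showing why a cusp gets only one double normal here (the minimum of $\sq_a$ lies on the collapsed diagonal). You should state explicitly that $\cos\alpha\neq 0$ at the farthest point is part of the genericity assumption, since otherwise the $t$-direction can be top-regular.
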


\begin{remark} For the statement about intersection points, see  Halpern \cite{Halpern1}, who proved several counting formula's  for smooth immersed closed curves in the plane. This concerns not only double normals but also double tangens and normal-tangent coincidence. 
\end{remark}


\subsection{Examples}

\begin{Ex} Hypercycloids are classical curves: the locus of a point on a circle rolling on a bigger circle.  See Figure \ref{fig:dn-pics} Left. We consider  a hypercycloid with 3 cusps:\\
$x(t) = 2 \cos (t) + \sin (2t) \; ; \; y(t) = 2 \sin (t) + \cos (2t)$\\
There are 6 normals: 3 ss-normals of maximum type;  3 rs-normals of saddle type. Computations can be easily done with computer software, such as Mathematica.

\end{Ex}

\begin{Ex}
Lemniscate with 2 cusps:\\
$x(t) = \frac{\cos(t)}{1 + \sin^2(t)}   \; \; \; ; \; \; \;
y(t)= \frac{\sin^3 (t)\cos(t)}{1 + \sin^2(t)}$\\
There are 3 rr-normals: 1 ss-normal of maximum type;  2 normals of saddle type.  Moreover there is 1 minimum, corresponding to te intersection point. See Figure \ref{fig:dn-pics} Right. Compare  also the Bernoulli lemniscate.

\begin{figure}[ht] \label{fig:dn-pics}
\begin{minipage}[b]{0.45\linewidth}
\centering
\includegraphics[width=\textwidth]{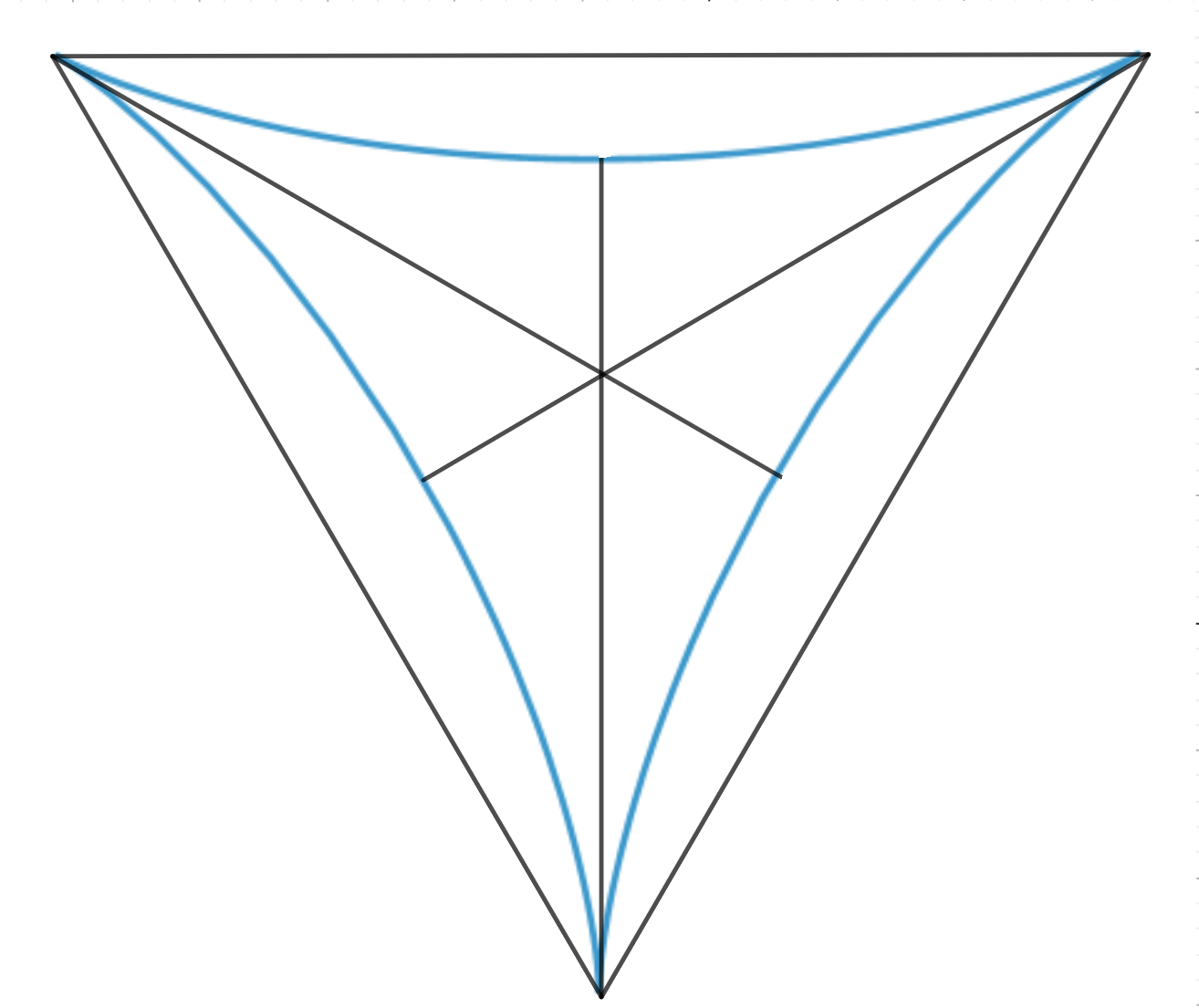}
\label{fig:image1}
hypercycloid
\end{minipage}
\hspace{0.05\linewidth}
\begin{minipage}[b]{0.45\linewidth}
\centering
\includegraphics[width=\textwidth]{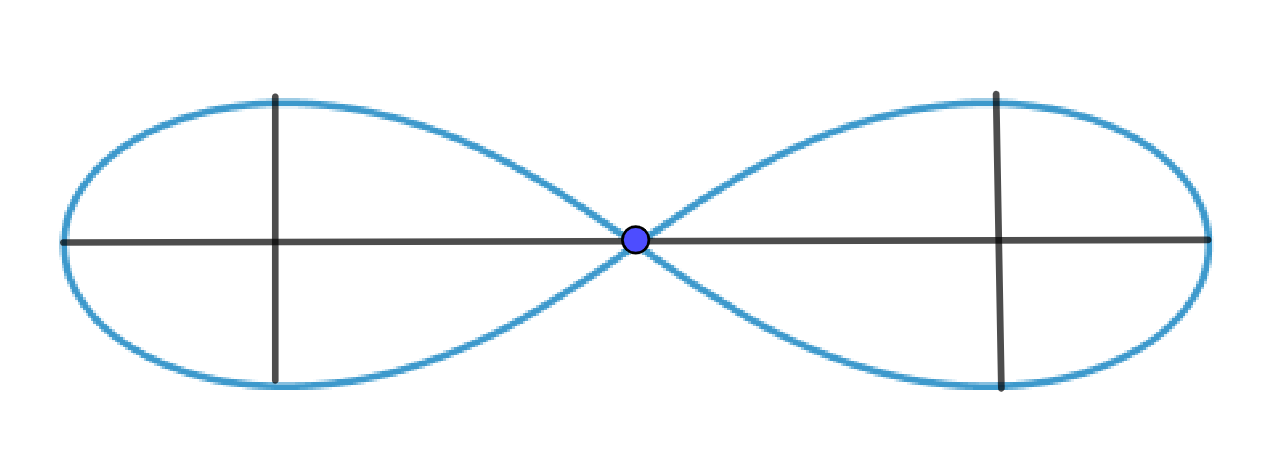}
Bernouilli  lemniscate \\
\vspace{0.2cm}
\includegraphics[width=\textwidth]{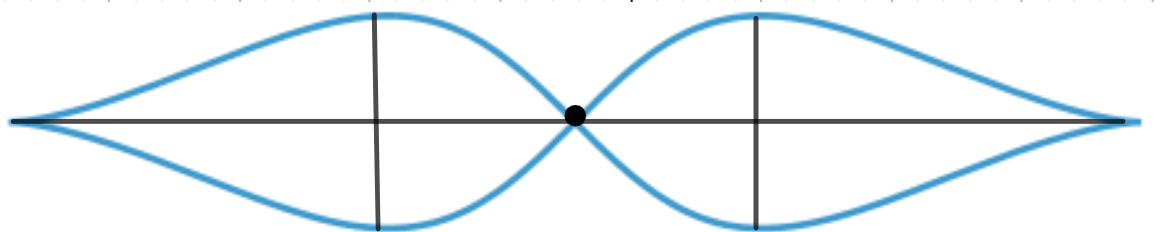}
lemniscate with 2 cusps

\vspace{0.2cm}
\label{fig:image2}
\end{minipage}
\caption{Double normals for three curves}
\end{figure}

\end{Ex}
\section{The complex case}

The paper \cite{JST} contains a detailed analysis of the ED-discriminant of a algebraic curve in the complex plane $\mathbb{C}^2$.
The  distance function is the holomorphic function 
\begin{equation}\label{e:sqdc}
\sq_u:X\rightarrow \mathbb{C}, \ \ \  \sq_u(z):=(z-u)^2,
\end{equation}
which in contrast with the real case  (\ref{e:sqd0}) does not satisfy the usual distance properties. The definition of discriminant is adapted to the complex case. Algebraic singular points of $X$ are all geometrically singular (i.e. never $C^1$-smooth).

It is shown, that the complex ED-discriminant can be decomposed in the following way:
  $$ \Delta_{\tED} = \Delta^{focal} \cup \Delta^{iflex}  \cup \Delta^{\atyp} \cup \Delta^{\sing}  $$
  
  $(1).$    \emph{The focal set} $\Delta^{\focal}$ represents the tradional evolute and is birational with the regular part of $X$. It is generically curved (may contain components of higher degree); while the other components are lines and related to certain isolated points on $X$ (or its completion  at infinity).

 The line components in the decomposition are:
 
 $(2).$  \emph{The atypical discriminant} $\Delta^{\atyp}$, due to the Morse points which are ``lost'' at infinity. A necessary condition is that  the completion $\overline{X}$ passes through an isotropic point.

 $(3).$  \emph{The singular  discriminant} $\Delta^{\sing}$, due to the Morse points which move to singularities of $X$. 
 
  $(4)$ \emph{The iflex discriminant } $\Delta^{\iflex}$, due to collision of Morse points on $X_{\reg}$, which move to flex points on $X$, with isotropic tangent lines (for short:\emph{ iflex points}). 
  
  Since the atypical and iflex discriminant don't occur in the real case, we have there only
   $$ \Delta_{tED} = \Delta^{focal} \cup   \Delta^{\sing},  $$
 where we note, that  $\Delta^{\sing}$ in the real case can contain less normal lines than its complex companion.


\end{document}


\end{document}